\DeclarePairedDelimiter{\norm}{\lVert}{\rVert}
\DeclareMathOperator\supp{supp}
\theoremstyle{plain}\newtheorem{theorem}{Theorem}
\theoremstyle{plain}\newtheorem{proposition}[theorem]{Proposition}
\theoremstyle{plain}\newtheorem{lemma}[theorem]{Lemma}
\theoremstyle{plain}
\theoremstyle{definition}
\theoremstyle{remark}\newtheorem{remark}[theorem]{Remark}
\theoremstyle{definition}\newtheorem{example}[theorem]{Example}
\newtheorem*{duboisreymond}{DuBois-Reymond Lemma}
\numberwithin{theorem}{section} 
\numberwithin{equation}{section}
\newcommand{\covD}{\mathrm{D}}
\newcommand{\covI}{\int\!\!\!\!\!\!\int}
\title{
A note on the regularity and the existence of Riemannian splines}
\author{D. Corona, R. Giamb\`o, P. Piccione}
\begin{document}
\maketitle

\begin{abstract}
	In this paper,
	we present a comprehensive proof concerning
	the regularity of critical points
	for the spline energy functional on Riemannian manifolds,
	even for the general higher-order case.
	Although this result is widely acknowledged
	in the literature, a detailed proof was previously absent.
	Our proof relies on a
	generalization of the DuBois-Reymond Lemma.
	Furthermore, we establish the existence of minimizers
	for the spline energy functional in cases
	where multiple interpolation points are prescribed
	alongside just one velocity.
\end{abstract}

\textbf{MSC(2020)}:
49J15. 

\textbf{Key words and phrases}:
Riemannian cubic;
spline interpolation.

\section{Introduction}

This paper analyzes the regularity and existence of spline
curves on a Riemannian manifold $(M,g)$,
defined variationally as the critical points of the 
\emph{spline energy functional}
$f\colon \Gamma \to M$
that reads as follows:
\begin{equation}
	\label{eq:def-f-intro}
	f(\gamma) \coloneqq
	\frac{1}{2}
	\int_{0}^{1}
	g\big(\covD_t \dot{\gamma}(t),\covD_t \dot{\gamma}(t)\big)
	\mathrm{d}t,
\end{equation}
where
$\Gamma$ is a suitable subspace of the Sobolev functions
$W^{2,2}([0,1],M) = H^2([0,1],M)$
and 
$\covD_t$ denotes the usual covariant derivative
given by $g$
(more formally, by its Levi-Civita connection).
Usually, $\Gamma$ is the set of curves that
pass through some prescribed points of the manifolds at 
prescribed time instants (interpolation conditions),
with some fixed velocities on some of these points.
When $M$ is an euclidean space $\mathbb{R}^{n}$,
the critical points of such a functional are the cubic splines,
namely piecewise third degree polynomials,
from which this functional takes its name.

This problem has been introduced in the Riemannian setting 
by L. Noakes, G. Heinzinger and B. Paden
in~\cite{Noakes1989},
where $\Gamma$ is the set of \emph{smooth} curves 
with prescribed initial and final points and velocities.
Under this regularity assumption
on the curves one considers, 
~\cite[Theorem 1]{Noakes1989}
states that
if $\gamma$ is a critical point of $f$ then
\begin{equation}
	\label{eq:Riemannian-spline-intro}
	\covD^3_t \dot{\gamma}(t) + R(\covD \dot\gamma(t),\dot{\gamma}(t))\dot{\gamma}(t)
	= 0,
	\quad\text{for any } t \in [0,1],
\end{equation}
where $R$ denotes the Riemannian curvature tensor
of the metric $g$ and $\covD^3_t$
denotes the third order covariant derivative.
The fourth-order differential
equation~\eqref{eq:Riemannian-spline-intro}
is derived from the critical point condition.
This is done by starting from the first variation formula of $f$,
performing explicit computations,
using the integration by parts formula,
and then applying the fundamental lemma of
the calculus of variations,
which allows to obtain a differential
equation from an integral one.

However, if one assume a priori that the critical point $\gamma$
has only the $H^2$--regularity,
which is the ``natural'' regularity coming from the definition of $f$,
the integration by parts is not allowed,
and the above procedure prematurely stops.
Indeed, by standard computations,
the first variation of $f$ at $\gamma$
in the direction $\xi$ is as follows: 
\begin{equation}
	\label{eq:first-variation-intro}
	\mathrm{d}f(\gamma)[\xi]
	= \int_{0}^{1}
	\Big(
		g\big(\covD_t\dot{\gamma},\covD^2_t\,\xi\big)
		- g\big(R\big(\dot{\gamma},\covD_t\dot{\gamma}\big)\dot{\gamma},\xi\big)
	\Big) \mathrm{d}t,
\end{equation}
where, at this point in the presentation, one can assume
that $\xi\colon [0,1] \to TM$
is a smooth vector field on $\gamma$
with compact support in $(0,1)$.
If $\gamma$ is only of class $H^2$,
then $\covD_t \dot{\gamma}$ is a
vector field on $\gamma$ with $L^2$ regularity.
Since the minimal regularity assumption to perform
the integration by parts is the existence of a weak derivative,
the following step,
\[
	\int_{0}^1	
	g\big(\covD_t\dot{\gamma},\covD^2_t\,\xi\big)
	\mathrm{d}t
	= 
	\int_{0}^1	
	g\big(\covD_t^3\dot{\gamma},\xi\big)\mathrm{d}t,
\]
cannot be executed
since $\covD_t^2\dot{\gamma}$ does not exist for a curve of class $H^2$.

While the literature on Riemannian splines is extensive
due to their numerous applications in control theory,
robotics, computational graphics,
geometric mechanics, to name a few,
it appears that the regularity problem is often circumvented
by imposing certain smoothness conditions
on the curves under consideration.
For example, the pioneering work~\cite{Noakes1989}
considers only smooth functions,
while~\cite{Crouch1991},
where multiple interpolation points are considered,
assumes to work with $C^1$ piecewise smooth curves,
as done also in~\cite{Camarinha2001}.
Jumping to more recent works, 
~\cite{Heeren2019} provides a regularity result
for the minimizers of a slightly different functional
(the one that weights the spline energy functional
with the path energy functional).
Starting from the $H^2$--regularity,
~\cite[Theorem 2.19]{Heeren2019} states
that the minimizers are
in $C^{2,\frac{1}{2}}([\delta,1-\delta],M)$,
for any $\delta \in (0,1/2)$.
However, this is still not sufficient to perform 
an integration by parts
and, consequently, to obtain~\eqref{eq:Riemannian-spline-intro}.
Indeed, if a minimizer $\gamma$ is of class $C^{2,\frac{1}{2}}$,
then its second order derivative
is a vector field of class $C^{0,\frac{1}{2}}$
which can not admit a weak derivative.
In the one dimensional Riemannian setting, a typical function
with the above properties is the well-known Weierstrass function,
which is H\"older continuous but it is not absolutely continuous.
If one requires the $C^{0,\frac{1}{2}}$--regularity,
then the real parameters $a,b$ in the Weierstrass function 
\[
	W(t) = \sum_{k  = 0}^{\infty}
	a^k\cos(b^k\pi t),
\]
have to be chosen such that $b = 1/a^2 \in \mathbb{N}$,
$a \in (0,1)$ and $1/a > 1 + \frac{3}{2}\pi$.
With these parameters,
$W \in C^{0,\frac{1}{2}}(\mathbb{R},\mathbb{R})$
but, as it doesn't admit a derivative at any point,
it is not absolutely continuous.

The study of the splines on Riemannian manifolds
has also application in the collision avoidance of multiple agents,
as investigated, for instance,
in~\cite{Assif2018,Colombo2020,goodman2022},
where the spline energy functional
is combined both with the path energy one
and with a distance function between multiple curves.
Once again, in~\cite{Assif2018,goodman2022}
the regularity issue is not directly tackled,
since the functional space is given
by the $C^1$ piecewise smooth curves.
In~\cite{Colombo2020}, the functional space is
given among the curves with $H^2$--regularity
and it is stated that the critical points are smooth;
however, the proof is omitted, mentioning the results
in~\cite{Assif2018}, where the piecewise smoothness is assumed.

In recent years, the spline problem has also
garnered interest for the
challenging and intriguing study of its flow,
known as \emph{elastic flow},
which involves the evolution of curves
following the negative gradient of the
spline energy functional,
leading to the analysis of a fourth-order parabolic quasilinear PDE.
The study of this flow began in~\cite{polden1996},
but for a detailed survey and references,
we refer to~\cite{Pozzetta2021}
and the works cited therein.
Even within this framework, the admissible curves
exhibit more than $H^2$-regularity.
For instance,
both in~\cite{Pozzetta2021} and in~\cite{Pozzetta2022},
they are of class $H^4$.
A similar analysis was recently carried out in~\cite{lin2024},
where the elastic flow is studied in a broader context.
Even in this scenario,
when constrained to the original spline energy functional,
the considered curves are of class $C^2$ and piecewise
$C^{4,\alpha}$, with $\alpha \in (0,1)$.

Finally, let us observe that in~\cite{GGP2002} and~\cite{GGP2004},
the regularity of the critical curves is not assumed a priori;
however, its proof is given, without further details,
invoking a ``standard'' bootstrap method.
The bootstrap method, usually used to prove the regularity 
of critical points of a Lagrangian action functional,
requires integration by parts in the ``wrong'' direction,
namely the only one which is possible under the regularity 
assumed at the beginning.
For example, starting from~\eqref{eq:first-variation-intro},
one should obtain the following:
\begin{equation}
	\label{eq:first-var-integratedtwice}
	\mathrm{d}f(\gamma)([\xi])
	= \int_{0}^{1}
	\Big(
		g\big(\covD_t\dot{\gamma},\covD^2_t\,\xi\big)
		- g\big(\mu(t),\covD^2_t\xi\big)
	\Big) \mathrm{d}t,
\end{equation}
where $\mu(t)$ is a vector field on $\gamma$
whose second order covariant derivative 
is $R(\dot{\gamma},\covD_t\dot{\gamma})\dot{\gamma}$,
hence it is of class $H^2$,
and $\mu(0) = 0$, $\covD_t \mu(0) = 0$.
It then applies the DuBois-Reymond Lemma, which,
for the reader's convenience, is reported here
in the simpler setting of real functions
(see, e.g., \cite[Corollary 1.25]{Dacorogna-Intro}).
\begin{duboisreymond}
	Let $u \in L^1_{loc}((a,b),\mathbb{R})$
	such that
	\[
		\int_{a}^b u(t) \eta(t)\mathrm{d}t = 0,
		\qquad \forall \eta \in C^{\infty}_c([a,b],\mathbb{R})
		\quad\text{s.t. } \int_a^b \eta(t)\mathrm{d}t = 0.
	\]
	Then, there exists $c \in \mathbb{R}$ such that
	$u = c$ a.e. in $(a,b)$.
\end{duboisreymond}
The main problem to apply the ``standard'' bootstrap method is as follows.
From the critical point condition $\mathrm{d}f(\gamma) = 0$,
hence from the null condition on~\eqref{eq:first-var-integratedtwice},
one can't infer that $\covD_t \dot{\gamma}(t) = v + \mu(t)$ a.e.  on $[0,1]$
for some \emph{parallel} vector $v$ field on $\gamma$
(i.e., $\covD_t v \equiv 0$),
obtaining that $\covD_t \dot{\gamma}$ is of class $H^2$
(so that one can infer
the spline equation~\eqref{eq:Riemannian-spline-intro}
as previously described).
This is not possible because the set of possible variations 
one can apply to $\gamma$ 
it is strictly contained within the set of the variations
required to apply the DuBois-Reymond Lemma.
Indeed, if $\xi$ has compact support in $(0,1)$,
not only 
$ \int_{0}^{1} \covD^2_t \xi(t) \mathrm{d}t = 0$,
which is obtained from the condition
that the covariant derivative
of $\xi$ is $0$ at the extreme points,
but we have the additional constraints that 
$\xi$ has to be null at the extremes.
As a consequence, the ``standard'' bootstrap method
mentioned in~\cite{GGP2002,GGP2004} does not work,
or rather, it requires some further clarification.

In this paper, we demonstrate
that a bootstrap method can be indeed 
successfully applied
by utilizing a generalization of the DuBois-Reymond Lemma,
namely Lemma~\ref{lem:affine-ortho},
thereby proving the regularity of critical points.
More formally, we have the following result.

\begin{theorem}
	\label{theorem:regularity}
	Let $(M^n,g)$ be an $n$--dimensional smooth
	and complete Riemannian manifold.
	Let $\{p_0, \ldots, p_N\} \subset M$ be a set of
	$N+1$ fixed points,
	$0 = t_0 < t_1 < \dots < t_N = 1$
	a partition of the unit interval,
	and let us fix an index $j \in \{0, \ldots, N\}$,
	together with a vector $v \in T_{p_j}M$.
	Let 
	\[
		\Gamma = \left\{
			\gamma \in H^2([0,1],M):
			\gamma(t_i) = p_i,
			\, \forall i = 0, \ldots, N,
			\, 
			\dot{\gamma}(t_j) = v
		\right\},
	\]
	and $f \colon \Gamma \to \mathbb{R}$ be defined by~\eqref{eq:def-f-intro}.
	Then, if $\gamma \in \Gamma$ is a critical point of $f$,
	it is smooth on any interval $[t_{i-1}, t_i]$.
	As a consequence, for any $i = 1, \ldots, N$
	we have
	\begin{equation}
		\label{eq:Riemannian-spline}
		\covD^3_t \dot{\gamma}(t)
		+ R\big(\covD_t \dot{\gamma}(t), \dot{\gamma}(t)\big)
		\dot{\gamma}(t)
		= 0,
		\quad \text{for any } t \in [t_{i-1}, t_i].
	\end{equation}
	Moreover, both $\gamma|_{[0, t_j]}$
	and $\gamma|_{[t_j, 1]}$ are of class $C^2$;
	if $t_j \ne 0$, then $\covD_t \gamma(0) = 0$,
	and if $t_j \ne 1$, then $\covD_t \gamma(1) = 0$.
\end{theorem}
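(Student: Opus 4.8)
The plan is to prove the statement locally on each sub-interval and then analyze the junctions, treating the $N+1$ interpolation constraints and the single velocity constraint at $t_j$ as boundary data. First I would fix a sub-interval $[t_{i-1},t_i]$ and restrict to variations $\xi$ with compact support in the open interval $(t_{i-1},t_i)$; such $\xi$ automatically respect every interpolation constraint as well as the velocity constraint at $t_j$, so criticality together with the first variation formula~\eqref{eq:first-variation-intro} gives
\[
	\int_{t_{i-1}}^{t_i}\Big(g\big(\covD_t\dot{\gamma},\covD^2_t\xi\big)-g\big(R(\dot{\gamma},\covD_t\dot{\gamma})\dot{\gamma},\xi\big)\Big)\,\de t=0 .
\]
Since $\dot{\gamma}$ is continuous and $\covD_t\dot{\gamma}\in L^2$, the field $R(\dot{\gamma},\covD_t\dot{\gamma})\dot{\gamma}$ is $L^2$, so I can define $\mu$ on $[t_{i-1},t_i]$ as the solution of $\covD^2_t\mu=R(\dot{\gamma},\covD_t\dot{\gamma})\dot{\gamma}$ with $\mu(t_{i-1})=0$ and $\covD_t\mu(t_{i-1})=0$; integrating covariantly twice shows $\mu\in H^2$. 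Because $\mu\in H^2$ and $\xi$ has compact support, I may now integrate the curvature term by parts twice in the legitimate direction, exactly as in~\eqref{eq:first-var-integratedtwice}, rewriting criticality as $\int_{t_{i-1}}^{t_i}g\big(\covD_t\dot{\gamma}-\mu,\covD^2_t\xi\big)\,\de t=0$ for all such $\xi$.

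The key step comes next. I would apply the generalization of the DuBois-Reymond Lemma, namely Lemma~\ref{lem:affine-ortho}, to the $L^2$ field $u\coloneqq\covD_t\dot{\gamma}-\mu$: since $u$ is $g$-orthogonal to every $\covD^2_t\xi$, the lemma forces $u$ to coincide almost everywhere with a covariantly affine field $J$, i.e.\ one with $\covD^2_t J=0$. Such a $J$ solves a linear second-order ODE with smooth (parallel-transport) coefficients, hence is smooth; combined with $\mu\in H^2$ this yields the crucial upgrade $\covD_t\dot{\gamma}=\mu+J\in H^2$ on $(t_{i-1},t_i)$. This is precisely the regularity gain that the ``standard'' bootstrap cannot produce directly.

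With $\covD_t\dot{\gamma}\in H^2\subset C^1$ at hand, the second and third covariant derivatives now exist, and differentiating $\covD_t\dot{\gamma}=\mu+J$ twice recovers the spline equation~\eqref{eq:Riemannian-spline} in the almost-everywhere sense (using $R(\dot{\gamma},\covD_t\dot{\gamma})\dot{\gamma}=-R(\covD_t\dot{\gamma},\dot{\gamma})\dot{\gamma}$). From here I would run a standard ODE bootstrap in a local chart: writing $\ddot{\gamma}^k=(\covD_t\dot{\gamma})^k-\Gamma^k_{ab}(\gamma)\dot{\gamma}^a\dot{\gamma}^b$, the improved regularity of $\covD_t\dot{\gamma}$ raises that of $\ddot{\gamma}$, hence of $\gamma$, hence of the Christoffel and curvature coefficients on the right-hand side of~\eqref{eq:Riemannian-spline}, which in turn raises the regularity of $\covD^3_t\dot{\gamma}$ and therefore of $\covD_t\dot{\gamma}$; iterating gives $\gamma\in C^\infty([t_{i-1},t_i])$ and~\eqref{eq:Riemannian-spline} on every sub-interval.

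It remains to analyze the junctions. At an interior interpolation point $t_i$ with $i\neq j$ only the position is prescribed, so I would test with $\xi$ supported near $t_i$, vanishing at $t_i$ but with $\covD_t\xi(t_i)$ arbitrary; integrating by parts on $[t_{i-1},t_i]$ and $[t_i,t_{i+1}]$ separately and using that the bulk terms vanish by~\eqref{eq:Riemannian-spline}, the only surviving contribution is $g\big(\covD_t\dot{\gamma}(t_i^-)-\covD_t\dot{\gamma}(t_i^+),\covD_t\xi(t_i)\big)$, whose vanishing for all $\covD_t\xi(t_i)$ forces continuity of the acceleration and hence $C^2$-matching across $t_i$; this yields $\gamma|_{[0,t_j]},\gamma|_{[t_j,1]}\in C^2$. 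The same computation at a free endpoint (at $t=0$ when $t_j\neq0$, or at $t=1$ when $t_j\neq1$), where $\xi$ vanishes but $\covD_t\xi$ is free, gives the natural boundary condition $\covD_t\dot{\gamma}=0$; at $t_j$, where both $\xi$ and $\covD_t\xi$ must vanish, no matching condition survives, so the acceleration may jump and the two $C^2$ pieces need glue only to $C^1$. I expect the main obstacle to be the regularity upgrade of the second step together with the low-regularity integration by parts preceding it: one must construct $\mu$ invariantly, verify $\mu\in H^2$, and check the precise hypotheses of Lemma~\ref{lem:affine-ortho} for vector fields along a merely $H^2$ curve, since everything downstream hinges on turning the integral identity into the pointwise relation $\covD_t\dot{\gamma}=\mu+J$.
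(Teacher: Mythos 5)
Your proposal follows essentially the same route as the paper: the auxiliary field $\mu$ you construct by double covariant integration is the paper's $\eta_\gamma$, the orthogonality statement you invoke is precisely the Riemannian generalization of the DuBois-Reymond Lemma (Lemma~\ref{lem:second-order-DuBois-Riemann}, whose conclusion $\nu+t\zeta$ with $\nu,\zeta$ parallel is exactly your ``covariantly affine'' field $J$), and your bootstrap and junction analysis match the paper's. The only cosmetic difference is that you phrase the bootstrap as an ODE argument in local charts, whereas the paper iterates the identity $\covD_t\dot\gamma=\eta_\gamma+\nu+t\zeta$ together with $\covD_t^2\eta_\gamma=R(\dot\gamma,\covD_t\dot\gamma)\dot\gamma$ directly; both yield smoothness on each $[t_{i-1},t_i]$.
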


\begin{remark}
	The regularity results remain analogous even
	when velocities are prescribed at multiple points.
	In such cases, a critical point $\gamma$ will be of class
	$C^2$ within any interval delimited
	by two instants where velocity is prescribed.
	The proof for this follows the same argument
	as that used in Theorem~\ref{theorem:regularity},
	with the crucial observation that
	each point where velocity is prescribed
	must be treated similarly to the only one
	in Theorem~\ref{theorem:regularity}.
	To maintain clarity and avoid heavy notation
	that might obscure the main proof steps,
	we focus on using only one point with prescribed velocity.
	If no velocity is prescribed, and a critical point exists,
	then it is of class $C^2$ on the whole unit interval,
	and the covariant derivative of its velocity vanishes
	at the extrema.
\end{remark}

\begin{remark}
	The above regularity theorem holds even if 
	$M$ is a differentiable manifold of class $C^4$
	and the metric tensor $g$ is of class $C^3$,
	as these are the minimal (reasonable) assumptions
	so that~\eqref{eq:Riemannian-spline} can be explicitly written
	in local charts.
\end{remark}

The regularity ensured by Theorem~\ref{theorem:regularity}
can be generalized for higher-order Riemannian 
splines, the so-called \emph{$k$-splines},
which are, for any integer $k \ge 2$,
the critical points of the energy functional
$f_k\colon \Gamma\subset H^{k}([0,1],M) \to \mathbb{R}$
given by
\begin{equation}
	\label{eq:f-k-Riemann}
	f_k(\gamma)\coloneqq
	\int_0^1 g\big(\covD_t^{k-1}\dot{\gamma},
	\covD_t^{k-1}\dot{\gamma}\big)
	\mathrm{d}t.
\end{equation}
Once again, if the regularity result is ensured, 
then applying the integration by parts formula
the necessary number of times
and the fundamental lemma of the calculus of variations
leads to an ODE of order $2k$
on any interval $I \subset [0,1]$
delimited by some interpolation conditions.
However, even in this case, we couldn't find 
satisfactory regularity results for the critical points
of the functional~\eqref{eq:f-k-Riemann},
as usually one works in the set 
of piecewise smooth curves or,
at least, assuming the 
$C^{2k-2}$ regularity on each interval
of the admissible curves
(see, e.g.,~\cite{Machado2010,lin2024}).
Therefore, we provide the following result,
which is based on the further generalization of the 
DuBois-Reymond Lemma
presented in Lemma~\ref{lem:DuBois-k-Riemannian}.

\begin{theorem}
	\label{theorem:regularity-k-Riemann}
	Let $(M^n,g)$ be an $n$--dimensional smooth
	and complete Riemannian manifold.
	Let $\{p_0, \ldots, p_N\} \subset M$ be a set of
	$N+1$ fixed points,
	$0 = t_0 < t_1 < \dots < t_N = 1$
	a partition of the unit interval,
	and let us fix an index $j \in \{0, \ldots, N\}$,
	together with $k-1$ vectors $v_\ell \in T_{p_j}M$,
	for $\ell = 1,\dots,k-1$.
	Let 
	\[
		\Gamma_k = \Big\{
			\gamma \in H^k([0,1],M):
			\gamma(t_i) = p_i,
			\, \forall i = 0, \ldots, N,
			\, 
			\covD_t^{\ell-1}\dot{\gamma}(t_j) = v_{\ell},
			\, \forall \ell = 1,\dots,k-1
		\Big\},
	\]
	and $f_k \colon \Gamma_k \to \mathbb{R}$ be defined 
	by~\eqref{eq:f-k-Riemann}.
	Then, if $\gamma \in \Gamma$ is a critical point of $f_k$,
	it is smooth on any interval $[t_{i-1}, t_i]$.
	As a consequence, for any $i = 1, \ldots, N$
	we have
	\begin{equation}
		\label{eq:Riemannian-k-spline}
		\covD_t^{2k - 1}\dot{\gamma}(t)
		+
		\sum_{\ell = 0}^{k-2}
		(-1)^{\ell}
		R\big(\covD_t^{2k - \ell - 3}\dot{\gamma}(t),
		\covD_t^{\ell}\dot{\gamma}(t)
		\big)\dot{\gamma}(t)
		= 0,
		\quad \text{for any } t \in (t_{i-1}, t_i).
	\end{equation}
	Moreover, both $\gamma|_{[0, t_j]}$
	and $\gamma|_{[t_j, 1]}$ are of class $C^{2k-2}$;
	if $t_j \ne 0$, then $\covD_t^{k + \ell-2} \dot{\gamma}(0) = 0$,
	for $\ell= 1,\dots,k-1$,
	and similarly for $t_j \ne 1$.
\end{theorem}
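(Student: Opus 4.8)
The plan is to run the same machine behind Theorem~\ref{theorem:regularity}, with $\covD_t^2$ replaced by $\covD_t^k$ and the first-order DuBois-Reymond statement (Lemma~\ref{lem:affine-ortho}) replaced by its $k$-th order counterpart (Lemma~\ref{lem:DuBois-k-Riemannian}). First I would localize: fix a subinterval $(t_{i-1},t_i)$ and test only with smooth vector fields $\xi$ along $\gamma$ having compact support in $(t_{i-1},t_i)$, so that every interpolation and velocity constraint is automatically preserved and no boundary terms appear. Differentiating $f_k(\gamma_s)$ and applying the commutation rule $\covD_s\covD_t Z=\covD_t\covD_s Z+R(\xi,\dot\gamma)Z$ iteratively to $\covD_t^{k-1}\dot\gamma$, together with $\covD_s\dot\gamma=\covD_t\xi$, produces (up to the harmless factor $2$)
\begin{equation}
	\mathrm{d}f_k(\gamma)[\xi]
	= \int_{t_{i-1}}^{t_i}
	g\big(\covD_t^{k-1}\dot\gamma,\covD_t^k\xi\big)\,\mathrm{d}t
	+ \sum_{m=0}^{k-2}\int_{t_{i-1}}^{t_i}
	g\big(A_m,\covD_t^m\xi\big)\,\mathrm{d}t,
\end{equation}
where each $A_m$ is a contraction of the curvature tensor (and its covariant derivatives) with covariant derivatives of $\dot\gamma$. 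The key structural facts, which I would verify by unwinding the commutators, are that no derivative of $\xi$ of order $\ge k$ survives in the sum (the highest is $\covD_t^{k-2}\xi$) and that the only factor in $A_m$ that is merely $L^2$ is $\covD_t^{k-1}\dot\gamma$; thus $A_m\in L^2$ while $m\le k-2$.

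The decisive step, exactly as in the case $k=2$, is to integrate the curvature terms ``in the wrong direction''. For each $m\le k-2$ I would define $\mu_m$ by integrating $A_m$ precisely $k-m\ge 2$ times with vanishing initial data at $t_{i-1}$, so that $\covD_t^{k-m}\mu_m=A_m$ and $\mu_m,\covD_t\mu_m,\dots,\covD_t^{k-m-1}\mu_m$ all vanish at $t_{i-1}$. Since $\xi$ is compactly supported, integrating by parts $k-m$ times leaves no boundary contribution and gives $\int g(A_m,\covD_t^m\xi)=(-1)^{k-m}\int g(\mu_m,\covD_t^k\xi)$. Setting $\mu=\sum_m(-1)^{k-m}\mu_m$ we obtain
\begin{equation}
	\mathrm{d}f_k(\gamma)[\xi]
	= \int_{t_{i-1}}^{t_i}
	g\big(\covD_t^{k-1}\dot\gamma-\mu,\covD_t^k\xi\big)\,\mathrm{d}t ,
\end{equation}
and, crucially, integrating an $L^2$ field at least twice yields $\mu\in H^2$.

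Now I would invoke Lemma~\ref{lem:DuBois-k-Riemannian}. Trivializing $TM$ along $\gamma$ by a parallel orthonormal frame turns $\covD_t$ into componentwise differentiation and reduces the vanishing of the pairing above over all compactly supported $\xi$ --- equivalently over all $\covD_t^k\xi$ whose components have vanishing moments up to order $k-1$ --- to $n$ scalar instances of the lemma. The conclusion is that $\covD_t^{k-1}\dot\gamma-\mu$ agrees a.e.\ with a field whose frame components are polynomials of degree $\le k-1$. As the polynomial part is smooth and $\mu\in H^2$, this upgrades $\covD_t^{k-1}\dot\gamma$ from $L^2$ to $H^2$, hence $\gamma$ from $H^k$ to $H^{k+2}$. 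Feeding the improved regularity back into the coefficients $A_m$ (and using that $H^j$ is a Banach algebra in one variable) makes $\mu$ more regular at each pass, so the argument gains derivatives indefinitely and yields $\gamma\in C^\infty([t_{i-1},t_i],M)$. Once smoothness is available, a genuine right-direction integration by parts followed by the fundamental lemma of the calculus of variations produces the $k$-spline equation~\eqref{eq:Riemannian-k-spline}, the alternating curvature sum arising from the repeated commutators.

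Finally I would read off the junction and boundary conditions from variations that are no longer compactly supported. At an interior knot $t_i\ne t_j$ only the position is prescribed, so $\xi(t_i)=0$ while $\covD_t^\ell\xi(t_i)$ stays free for $\ell\ge 1$; matching the one-sided boundary terms (now legitimate, since $\gamma$ is smooth on each side) forces the covariant derivatives $\covD_t^\ell\dot\gamma$ to agree across $t_i$ for $\ell\le 2k-3$, i.e.\ $\gamma\in C^{2k-2}$ there, whence $\gamma|_{[0,t_j]}$ and $\gamma|_{[t_j,1]}$ are $C^{2k-2}$. At $t_j$ the prescribed data $\covD_t^{\ell-1}\dot\gamma(t_j)=v_\ell$ annihilate $\covD_t^\ell\xi(t_j)$ for $\ell\le k-1$, removing enough boundary terms that no further gluing across $t_j$ is imposed --- which is precisely why the two halves are stated separately. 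At a free endpoint (say $t=0$ when $t_j\ne 0$) only $\xi(0)=0$ is constrained, and requiring the boundary terms to vanish against the free jets $\covD_t^\ell\xi(0)$, $\ell=1,\dots,k-1$, yields the natural conditions $\covD_t^{k+\ell-2}\dot\gamma(0)=0$. The main obstacle I anticipate is the regularity bookkeeping of the middle steps: one must confirm that the curvature coefficients $A_m$ never carry a derivative of $\xi$ of order $\ge k$ and are only as singular as $\covD_t^{k-1}\dot\gamma$, so that the wrong-direction integration genuinely places $\mu$ strictly above $\covD_t^{k-1}\dot\gamma$ in the Sobolev scale and the bootstrap improves at every iteration; the compatibility of the parallel-frame reduction with this improving regularity must be tracked alongside.
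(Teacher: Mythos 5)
Your proposal follows essentially the same route as the paper: rewrite the first variation so that all curvature contributions pair with $\covD_t^m\xi$ for $m\le k-2$ with $L^2$ coefficients, integrate by parts ``in the wrong direction'' with vanishing initial data to collect everything against $\covD_t^k\xi$ with an $H^2$ (or better) primitive, invoke Lemma~\ref{lem:DuBois-k-Riemannian}, bootstrap, and then recover the Euler--Lagrange equation and the junction/natural boundary conditions from right-direction integration by parts. The only cosmetic differences are that the paper first works out the case $k=3$ before the general bookkeeping with the multi-indices $\Omega_\ell$, and that it proves the generalized DuBois-Reymond lemma directly in the Riemannian setting by induction rather than via a parallel-frame reduction to the scalar case; both are correct.
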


\begin{remark}
	\label{rem:just-one-j}
	Theorem~\ref{theorem:regularity-k-Riemann} 
	can also be stated and proved
	in a slightly more general setting,
	by fixing the covariant derivatives of the curves
	at different points and by using different orders,
	where the latter are obviously always between $1$ and $k-1$.
	More formally, for each $\ell = 1,\dots,k-1$,
	one can choose an index $j_\ell \in \{0,\dots,N\}$
	and an order of derivative $w_\ell \in \{0,\dots,k-2\}$,
	and define
	\[
		\Gamma_k =
		\Big\{ \gamma \in H^k([0,1],M):
			\gamma(t_i) = p_i, \, \forall i = 0, \ldots, N,
			\covD_t^{w_\ell}\dot{\gamma}(t_{j_\ell}) = v_\ell,
		\, \forall \ell = 1,\dots,k-1 \Big\},
	\]
	where each $v_\ell$ belongs to $T_{p_{j_\ell}}M$.
	In this setting,~\eqref{eq:Riemannian-k-spline} still holds,
	but the last regularity results stated 
	in the theorem should be modified accordingly.
	However, for the sake of clarity and simplicity,
	we confine ourselves to the setting given in the theorem.
\end{remark}

\bigskip

Beside a comprehensive proof of
the regularity of critical points
for the spline energy functional, 
this paper contains an existence result
for the minimizers of the same functional.
The existence result we give
stems from the analysis of \cite[Lemma 2.15]{Heeren2019},
where it is proven that, in general, a minimizer of the 
spline energy functional can not exist if only the 
interpolation points are prescribed.
For the reader's convenience, the counterexample
provided as a proof of~\cite[Lemma 2.15]{Heeren2019}
is reported and discussed in details in
Example~\ref{ex:cylinderExample}.
Based on that remarkable example,
~\cite{Heeren2019} justifies the introduction of additional
constraints to obtain the existence of minimizers,
which are as follows:
\begin{itemize}
	\item \emph{natural} boundary conditions, i.e., 
		$ \covD_t \dot{\gamma}(0) = \covD_t \dot{\gamma}(1) = 0$;
	\item \emph{periodic} boundary conditions, i.e.,
		$p_0 = p_N$ and 
		$ \dot{\gamma}(0) = \dot{\gamma}(1)$;
	\item \emph{Hermite} boundary conditions, i.e.,
		the prescription of
		\emph{the initial and final velocities} of the curves.
\end{itemize}
Additionally,~\cite{Heeren2019} weights 
the spline energy functional with the path energy,
namely looking for the minimizers of 
\[
	F^{\sigma}(\gamma)\coloneqq
	f(\gamma)
	+ 
	\sigma
	\int_{0}^1 g\left(\dot{\gamma},\dot{\gamma}\right)
	\mathrm{d}t,
\]
for some $\sigma > 0$.
Indeed, in~\cite[Theorem 2.19]{Heeren2019},
the existence of a minimizer for $F^{\sigma}$ is guaranteed if $\sigma > 0$
and if one of the above constraints is assumed.

A direct inspection of the construction 
provided to show the non-existence of a minimizer
in~\cite[Lemma 2.15]{Heeren2019}
(see also Example~\ref{ex:cylinderExample})
suggests that, if \emph{just one} velocity
is prescribed, then a minimizer should exists,
even when $\sigma = 0$.
More formally, we have the following theorem,
whose setting is depicted in Figure~\ref{fig:manifold}.
\begin{theorem}
	\label{theorem:existence}
	Let $(M^n,g)$ be an $n$--dimensional
	complete Riemannian manifold.
	Let $\{p_0,\ldots,p_N\}\subset M$ be a set
	of $N+1$ fixed points
	and $0 = t_0 < t_1 < \dots < t_N = 1$
	a partition of the unit interval.
	Let us fix a vector $v \in T_{p_0}M$
	and let 
	\[
		\Gamma = \Big\{
			\gamma \in H^2([0,1],M):
			\gamma(t_i) = p_i,
			\, \forall i= 0,\dots, N,
			\,
			\dot{\gamma}(0) = v
		\Big\}.
	\]
	Then, the functional $f\colon \Gamma \to \mathbb{R}$ defined 
	by~\eqref{eq:def-f-intro} admits a minimizer.
\end{theorem}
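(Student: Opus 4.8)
The plan is to run the direct method of the calculus of variations, using the prescribed initial velocity to obtain the compactness that is missing in the purely interpolatory case of Example~\ref{ex:cylinderExample}. First I would note that $\Gamma \neq \emptyset$ (any smooth curve through the $p_i$ with $\dot\gamma(0)=v$ is admissible, using that $M$ is connected), so $m \coloneqq \inf_\Gamma f$ is finite and nonnegative, and fix a minimizing sequence $(\gamma_k)\subset\Gamma$ with $f(\gamma_k)\to m$ and, say, $f(\gamma_k)\le m+1$ for all $k$.

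The heart of the argument is an a priori bound that confines the minimizing sequence to a fixed compact set and controls it in $H^2$; this is the only place where the single prescribed velocity is genuinely used. Writing $P_{s,t}$ for the (isometric) parallel transport along $\gamma$ from $s$ to $t$, one has $\dot\gamma(t) = P_{0,t}v + \int_0^t P_{s,t}\big(\covD_s\dot\gamma\big)\,\de s$, whence, by Cauchy--Schwarz,
\[
	\abs{\dot\gamma(t)}_g
	\le \abs{v}_g + \int_0^t \abs{\covD_s\dot\gamma}_g\,\de s
	\le \abs{v}_g + \sqrt{2\,f(\gamma)}.
\]
Thus $\norm{\dot\gamma_k}_{L^\infty}$ is bounded uniformly in $k$, so the lengths $L(\gamma_k)=\int_0^1\abs{\dot\gamma_k}_g\,\de t$ are uniformly bounded; since every $\gamma_k$ starts at $p_0$, the Hopf--Rinow theorem (here completeness is essential) places all the $\gamma_k$ in a common compact set $K\subset M$.

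To convert these intrinsic bounds into genuine compactness I would pass to an ambient space through a smooth isometric embedding $\iota\colon M\to\mathbb{R}^D$ (which exists by Nash's theorem), setting $u_k=\iota\circ\gamma_k$. The ambient acceleration decomposes orthogonally as $\ddot u_k = \de\iota\big(\covD_t\dot\gamma_k\big) + \mathrm{II}_{u_k}(\dot u_k,\dot u_k)$, where the tangential part has the same pointwise norm as $\covD_t\dot\gamma_k$, hence $L^2$-norm controlled by $f(\gamma_k)$, while the normal (second fundamental form) term is bounded pointwise by $C_K\abs{\dot u_k}^2$ on the compact set $\iota(K)$. Combined with the uniform velocity and position bounds, this yields a uniform bound on $\norm{u_k}_{H^2([0,1],\mathbb{R}^D)}$. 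Passing to a subsequence, $u_k\rightharpoonup u$ weakly in $H^2$ and, by the compact embedding $H^2\hookrightarrow C^1$, strongly in $C^1$. The $C^0$ convergence preserves the interpolation conditions $\gamma(t_i)=p_i$ and forces the limit into the closed set $\iota(K)\subset\iota(M)$, while the $C^1$ convergence preserves $\dot\gamma(0)=v$; hence $\gamma\coloneqq\iota^{-1}\circ u$ belongs to $\Gamma$.

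It remains to prove the weak lower semicontinuity $f(\gamma)\le\liminf_k f(\gamma_k)=m$, which then forces $\gamma$ to be a minimizer. Writing the energy in ambient form, $f(\gamma)=\tfrac12\int_0^1\abs{\ddot u-\mathrm{II}_u(\dot u,\dot u)}^2\,\de t$, the key point is that the lower-order term $\mathrm{II}_{u_k}(\dot u_k,\dot u_k)$ converges \emph{strongly} in $L^2$ (indeed uniformly, by the $C^1$ convergence and the smoothness of $\mathrm{II}$ on $K$), whereas $\ddot u_k\rightharpoonup\ddot u$ only weakly in $L^2$; therefore $\ddot u_k-\mathrm{II}_{u_k}(\dot u_k,\dot u_k)\rightharpoonup\ddot u-\mathrm{II}_u(\dot u,\dot u)$ weakly in $L^2$, and weak lower semicontinuity of the $L^2$-norm gives the claim. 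I expect the only real obstacle to be the coercivity step of the second paragraph: once one recognizes that a single prescribed velocity upgrades the $L^2$ bound on the covariant acceleration into an $L^\infty$ bound on the velocity — and thereby into a uniform bound on the image and on the full $H^2$ norm — the remaining extraction and lower-semicontinuity arguments are the standard machinery of the direct method.
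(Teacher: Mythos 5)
Your proposal is correct and reaches the same conclusion by the same overall strategy as the paper --- the direct method, with the single prescribed velocity supplying the coercivity that Example~\ref{ex:cylinderExample} shows is otherwise missing --- but the technical implementation is genuinely different. For coercivity, you write $\dot\gamma(t)=P_{0,t}v+\int_0^tP_{s,t}(\covD_s\dot\gamma)\,\de s$ and get $\abs{\dot\gamma(t)}_g\le\abs{v}_g+\sqrt{2f(\gamma)}$ in one line; the paper instead studies $H_k(t)=g(\dot\gamma_k,\dot\gamma_k)$, derives $H_k(t)\le h_0+2c\norm{H_k}_{L^\infty}^{1/2}$ and solves the resulting quadratic inequality. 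The two are equivalent, yours being slightly more direct. The real divergence is in how the $L^\infty$ velocity bound is upgraded to $H^2$ compactness and in the lower semicontinuity step: the paper stays intrinsic, covering the limit curve by finitely many charts and writing the covariant acceleration as $\ddot x_k+\Gamma_\ell(x_k,\dot x_k)$ with the Christoffel symbols, which requires an intermediate Ascoli--Arzel\`a/uniform-convergence step just to set up the cover; you instead invoke a Nash isometric embedding and the Gauss decomposition $\ddot u_k=\de\iota(\covD_t\dot\gamma_k)+\mathrm{II}_{u_k}(\dot u_k,\dot u_k)$, so that all extractions happen once, globally, in $H^2([0,1],\mathbb{R}^D)$. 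In both versions the lower semicontinuity rests on the same structural fact --- the second-order term converges only weakly in $L^2$ while the quadratic lower-order term ($\Gamma_\ell(x_k,\dot x_k)$ for the paper, $\mathrm{II}_{u_k}(\dot u_k,\dot u_k)$ for you) converges strongly by $C^1$ convergence, reducing everything to weak lower semicontinuity of the $L^2$ norm. What each approach buys: the paper's is self-contained and uses only local coordinates; yours is shorter and conceptually cleaner but imports the Nash embedding theorem (an isometric embedding of a neighbourhood of the compact set $K$ suffices, and one should note, as you implicitly do, that the uniform limit of the $u_k$ stays in the compact set $\iota(K)$, so the limit curve genuinely lies in $M$ and $\iota^{-1}$ is well defined on it). No gap; both proofs are complete.
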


\begin{figure}[t]
	\centering
	\begin{picture}(315,213)
		\put(0,0){\includegraphics[width=315pt,height=213pt]{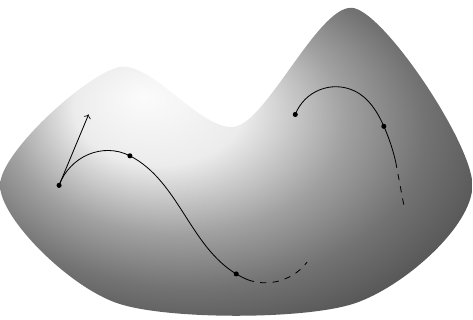}}
		\put(-15,95){$M$} 
		\put(10,78){$\gamma(0) = p_0$} 
		\put(90,110){$\gamma(t_1) = p_1$} 
		\put(105,20){$\gamma(t_2) =  p_2$} 
		\put(265,125){$\gamma(t_{N-1}) = p_{N-1}$} 
		\put(165,123){$\gamma_N(1) = p_N$} 
		\put(65,135){$\dot{\gamma}(0) = v \in T_{p_0}M$} 
		\put(107,70){$\gamma$} 
	\end{picture}
	\caption{The setting of Theorem~\ref{theorem:existence}:
		 on a Riemannian manifold $M$, 
		 we prescribe $N + 1$ points,
		 denoted by $p_0,\dots,p_N$,
		 together with the initial velocity $v \in T_{p_0}M$.
	 }
    \label{fig:manifold}
\end{figure}

\begin{remark}
	Prescribing the velocity at the initial time 
	doesn't affect the generality of the theorem.
	Indeed, if one prescribes the velocity at an 
	internal time instant $t_j \ne \{0,1\}$, 
	then Theorem~\ref{theorem:existence} ensures the 
	existence of two minimizers of $f$ 
	in the two time intervals $[0,t_j]$ 
	and $[t_j,1]$, 
	say $\gamma_1\colon [0,t_j] \to M$ 
	and $\gamma_2\colon [t_j,1] \to M$.
	For the sake of precision, this follows 
	also from the invariance of the spline energy functional
	with respect to backward reparametrization
	of the curves.
	By Theorem~\ref{theorem:regularity},
	the curve $\gamma\colon [0,1] \to M$ 
	obtained by gluing together $\gamma_1$ and $\gamma_2$ 
	is of class $H^2$,
	as its second order derivative could have
	just one step discontinuity at time $t_j$.
	Therefore, $\gamma$ is a minimizer of $f$.
\end{remark}

The rest of the paper is organized as follows.
Section~\ref{sec:onedimensionalregularity}
presents a proof of Theorem~\ref{theorem:regularity}
in the one-dimensional case,
allowing the key steps of the proof
to be appreciated without excessive notation.
Section~\ref{sec:Riemannian-regularity} then provides 
a detailed proof of Theorem~\ref{theorem:regularity}, 
encompassing all necessary details for the Riemannian setting.
Section~\ref{sec:k-regularity} is dedicated
to the proof of Theorem~\ref{theorem:regularity-k-Riemann},
beginning again with the analogous result
in the one dimensional setting.
Finally, Section~\ref{sec:existence} focuses on the proof of 
Theorem~\ref{theorem:existence}.

\section{One dimensional setting}
\label{sec:onedimensionalregularity}
In this section, we provide the regularity result for 
the critical points of the spline energy functional for 
one dimensional curves,
namely maps defined on $[0,1]$
with values in $\mathbb{R}$.
Let $N + 1 \ge 2$ be the number of fixed points,
which we denote by $p_0, p_1, \ldots, p_N \in \mathbb{R}$,
and let us choose a partition of the unit interval
$ 0 = t_0 < t_1 < \ldots < t_N = 1$.
We impose that every curve we consider should pass
through the point $p_i$ at the time $t_i$,
for every $i = 0, \ldots, N$,
and we prescribe the velocity of the curves
at just \emph{one} point,
i.e., there exists $j \in \{0, \ldots, N\}$ and $v \in \mathbb{R}$
such that every curve we consider has a derivative equal to $v$
at the time $t_j$.
More formally, our functional space is defined as follows:
\begin{equation}
	\label{eq:def-X}
	\Gamma \coloneqq \Big\{\gamma \in H^2([0,1], \mathbb{R}) : 
		\gamma(t_i) = p_i, \, \forall i = 0, \ldots, N,
	\, \dot{\gamma}(t_j) = v \Big\},
\end{equation}
where $H^2([0,1],\mathbb{R})$ denotes the space
of Sobolev real functions defined on $[0,1]$
that admits a second order weak derivative 
which belongs to the Lebesgue space $L^2([0,1],\mathbb{R})$.
The main objective of this section is 
studying the regularity of
the critical points of the spline energy functional
$f\colon \Gamma \to \mathbb{R}$, which in this case reads 
simply as follows:
\begin{equation}
	\label{eq:def-f}
	f(\gamma) =
	\frac{1}{2}\int_{0}^{1} \ddot{\gamma}^2(t) \, \mathrm{d}t.
\end{equation}

By some standard computations, the set of admissible variations is given by
\begin{equation}
	\label{eq:def-V}
	V \coloneqq \left\{ 
		\xi \in H^2([0,1], \mathbb{R}) :
		\xi(t_i) = 0, \, \forall i = 0, \ldots, N, \,
		\dot{\xi}(t_j) = 0
	\right\},
\end{equation}
so that, for every $\gamma \in \Gamma$, the differential of $f$ at $\gamma$,
that we denote by $\mathrm{d}f(\gamma)\colon V \to \mathbb{R}$,
is
\begin{equation}
	\label{eq:diff-f}
	\mathrm{d}f(\gamma)[\xi]
	= \int_{0}^{1} \ddot{\gamma}(t) \, \ddot{\xi}(t) \, \mathrm{d}t.
\end{equation}
\begin{remark}
	It is important to notice that if $\gamma$ is a critical point of $f$,
	we cannot apply the DuBois-Raymond lemma to deduce its regularity.
	Let us proceed with the computation to better highlight this statement,
	which is a key observation for all the subsequent discussion
	about the regularity of the critical points.
	Assume that $\gamma$ is a critical point,
	so that $\mathrm{d}f(\gamma)[\xi] = 0$ for every $\xi \in V$.
	Let us fix $i = 1,\dots, N$
	and restrict our analysis on the variations
	with compact support in $(t_{i-1}, t_{i})$,
	so we have
	\[
		\mathrm{d}f(\gamma)[\xi] =
		\int_{t_{i-1}}^{t_i} \ddot{\gamma}(t) \, \ddot{\xi}(t) \, \mathrm{d}t = 0,
		\quad \forall \xi \in V, \, \supp\xi \subset (t_{i-1},t_{i}).
	\]
	Notice that, if $\supp\xi \subset (t_{i-1},t_{i})$,
	then $\dot{\xi}(t_{i-1}) = \dot{\xi} \, (t_{i}) = 0$,
	so that $\int_{0}^{1} \ddot{\xi} \, \mathrm{d}t = 0$.
	By the DuBois-Raymond lemma,
	if we can consider \emph{all} the variations $\xi$ such that
	$\int_{0}^1 \ddot{\xi} \, \mathrm{d}t = 0$,
	then we can conclude that $\ddot{\gamma} = c$ a.e. on $(t_{i-1},t_{i})$,
	hence we obtain a regularity result.
	However, we have to take into account also the condition
	$\xi(t_{i-1}) = \xi(t_{i}) = 0$,
	so \emph{not all} the variations required for the application
	of the DuBois-Raymond lemma are available.
\end{remark}

Building on the previous remark,
we need a characterization result about the admissible variations.
\begin{lemma}
	\label{lem:characterize-V}
	For every $\xi \in V$ 
	and $i = 1,\dots,N$,
	we have
	\begin{equation}
		\label{eq:mediaNulla}
		\int_{t_{i-1}}^{t_i}
		\dot{\xi}\, \mathrm{d}t
		= 0,
	\end{equation}
	and
	\begin{equation}
		\label{eq:charV}
		t_{i}\dot{\xi}(t_{i})
		- t_{i-1}\dot{\xi}(t_{i-1})
		= \int_{t_{i-1}}^{t_i}t\,\ddot{\xi}\mathrm{d}t.
	\end{equation}
\end{lemma}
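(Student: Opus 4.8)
The plan is to derive both identities by elementary calculus, exploiting the Sobolev embedding $H^2([0,1],\mathbb{R}) \hookrightarrow C^1([0,1],\mathbb{R})$. This guarantees that $\xi$ is continuously differentiable and that $\dot\xi \in H^1([0,1],\mathbb{R})$ is absolutely continuous, which is precisely the regularity needed to invoke the fundamental theorem of calculus and to integrate by parts. I expect the only hypothesis actually used to be $\xi(t_i) = 0$ for all $i = 0, \ldots, N$; notably, the velocity constraint $\dot\xi(t_j) = 0$ plays no role in this particular lemma.

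For~\eqref{eq:mediaNulla}, I would apply the fundamental theorem of calculus to $\dot\xi$ on $[t_{i-1}, t_i]$, obtaining
\[
	\int_{t_{i-1}}^{t_i} \dot\xi\, \mathrm{d}t = \xi(t_i) - \xi(t_{i-1}) = 0,
\]
since $\xi$ vanishes at every interpolation instant. This step is immediate. For~\eqref{eq:charV}, the idea is to integrate by parts the right-hand side, differentiating the factor $t$ and integrating $\ddot\xi$:
\[
	\int_{t_{i-1}}^{t_i} t\,\ddot\xi\,\mathrm{d}t
	= \big[t\,\dot\xi\big]_{t_{i-1}}^{t_i} - \int_{t_{i-1}}^{t_i} \dot\xi\,\mathrm{d}t
	= t_i\dot\xi(t_i) - t_{i-1}\dot\xi(t_{i-1}),
\]
where the last equality uses~\eqref{eq:mediaNulla} to annihilate the remaining integral. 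The integration by parts is legitimate because $\dot\xi \in H^1$ and $t\mapsto t$ is smooth, so their product lies in $H^1$ and the boundary term is well defined.

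The only point demanding care, rather than a genuine obstacle, is justifying that these manipulations hold under mere $H^2$ regularity instead of smoothness; this is settled once and for all by the Sobolev embedding cited above, which makes both the evaluation of $\xi$ and $\dot\xi$ at the endpoints and the integration-by-parts formula fully rigorous. No deeper difficulty is expected.
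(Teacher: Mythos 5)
Your proof is correct and follows exactly the paper's own argument: the fundamental theorem of calculus combined with $\xi(t_{i-1})=\xi(t_i)=0$ gives~\eqref{eq:mediaNulla}, and a single integration by parts together with~\eqref{eq:mediaNulla} gives~\eqref{eq:charV}. The explicit appeal to the embedding $H^2\hookrightarrow C^1$ and the observation that $\dot\xi(t_j)=0$ is not needed are accurate refinements, but the substance is identical to the paper's proof.
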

\begin{proof}
	The condition~\eqref{eq:mediaNulla}
	naturally arises from the hypotheses that
	$\xi(t_i) = 0$ for each $i = 0, \dots, N$.
	Indeed, since $\xi(t_0) = 0$, 
	we have
	\[
		\xi(t_1) = \xi(t_0) + \int_{t_0}^{t_1}\dot{\xi}\,\mathrm{d}t
		= \int_{t_0}^{t_1}\dot{\xi}\,\mathrm{d}t = 0,
	\]
	and similarly for all subsequent indices.   
	Then,~\eqref{eq:charV} follows directly from~\eqref{eq:mediaNulla}
	and the integration by parts formula as follows:
	\[
		\int_{t_{i-1}}^{t_i}t\,\ddot{\xi}(t)\mathrm{d}t
		= 
		t_{i}\dot{\xi}(t_{i})
		- t_{i-1}\dot{\xi}(t_{i-1})
		- \int_{t_{i-1}}^{t_i}
		\dot{\xi}\,\mathrm{d}t
		=
		t_{i}\dot{\xi}(t_{i})
		- t_{i-1}\dot{\xi}(t_{i-1}).
	\]

\end{proof}

To state the next result, we need some further notation.
Let us define
\[
	V_0 \coloneqq \left\{
		\xi \in V: \dot{\xi}(t_i) = 0, \,
		\forall i=0,\dots,N
	\right\} \subset  V.
\]
By Lemma~\ref{lem:characterize-V},
if $\xi \in V_0$ then
for every $i=0,\dots,N-1$ we have
\[
	\int_{t_{i-1}}^{t_i}
	\ddot{\xi}(t)\,\mathrm{d}t = 0
	\quad\text{and}\quad
	\int_{t_{i-1}}^{t_i}t\,\ddot{\xi}(t)\,\mathrm{d}t = 0,
\]
and viceversa, since $\dot{\xi}(t_j) = 0$.
In other words, 
setting
\[
	W_0\big([a,b],\mathbb{R}\big) \coloneqq
	\left\{
		\eta \in L^2([a,b],\mathbb{R}):
		\int_{a}^{b}
		\eta(t)\,\mathrm{d}t = 0
		\text{ and }
		\int_{a}^{b}t\,\eta(t)\,\mathrm{d}t = 0
	\right\},
\]
for every interval $[a,b]\subset \mathbb{R}$,
we have
\begin{equation}
	\label{eq:V_0-otherChar}
	V_0 =
	\left\{
		\xi \in V:
		\ddot{\xi}\big|_{[t_{i-1},t_{i}]} \in W_0([t_{i-1},t_{i}],\mathbb{R})
		\text{ for every }i = 1,\dots,N
	\right\}.
\end{equation}

Now, we are ready to state the following result,
which generalizes the DuBois-Reymond lemma.
\begin{lemma}
	\label{lem:affine-ortho}
	If $u \in L^2([a,b],\mathbb{R})$
	is such that 
	\[
		\int_{a}^{b} u(t)\eta(t) \mathrm{d}t = 0,
		\qquad \forall \eta \in W_0([a,b],\mathbb{R}),
	\]
	then there exist two constants
	$c_0,c_1 \in \mathbb{R}$
	such that $u(t) = c_1 t + c_0$
	a.e. in $[a,b]$.
\end{lemma}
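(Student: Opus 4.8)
The plan is to recognize the two defining conditions of $W_0([a,b],\mathbb{R})$ as orthogonality relations in the Hilbert space $L^2([a,b],\mathbb{R})$. With respect to the standard inner product $\int_a^b fh\,\mathrm{d}t$, the space $W_0$ is precisely the orthogonal complement of the two--dimensional subspace $P \coloneqq \mathrm{span}\{1,t\}$ of affine functions. The hypothesis says that $u$ is orthogonal to every element of $W_0 = P^\perp$, so the desired conclusion $u \in P$ is just the identity $(P^\perp)^\perp = P$, valid because $P$ is finite--dimensional and hence closed. I would, however, carry this out constructively and self--containedly rather than invoking the abstract identity.

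First I would produce the candidate affine function explicitly. I seek constants $c_0,c_1$ such that $w \coloneqq u - c_1 t - c_0$ lies in $W_0$, i.e. such that $\int_a^b w\,\mathrm{d}t = 0$ and $\int_a^b t\,w\,\mathrm{d}t = 0$. These two requirements form a linear system in $(c_0,c_1)$ whose coefficient matrix is the Gram matrix of $\{1,t\}$,
\[
	\begin{pmatrix}
		\int_a^b 1\,\mathrm{d}t & \int_a^b t\,\mathrm{d}t \\[2pt]
		\int_a^b t\,\mathrm{d}t & \int_a^b t^2\,\mathrm{d}t
	\end{pmatrix}.
\]
Since $1$ and $t$ are linearly independent in $L^2([a,b],\mathbb{R})$, this matrix is positive definite (its determinant is strictly positive by the strict Cauchy--Schwarz inequality for the non--proportional functions $1$ and $t$), so the system has a unique solution $(c_0,c_1)$. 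This fixes the affine function $c_1 t + c_0$ and guarantees, by construction, that the remainder $w = u - c_1 t - c_0$ belongs to $W_0$.

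Second, I would test the hypothesis against the specific admissible function $\eta = w \in W_0$. On the one hand this gives $\int_a^b u\,w\,\mathrm{d}t = 0$. On the other hand, since $w \in W_0$, we have $\int_a^b (c_1 t + c_0)\,w\,\mathrm{d}t = c_1\int_a^b t\,w\,\mathrm{d}t + c_0\int_a^b w\,\mathrm{d}t = 0$. Subtracting these,
\[
	\int_a^b w^2\,\mathrm{d}t
	= \int_a^b (u - c_1 t - c_0)\,w\,\mathrm{d}t
	= \int_a^b u\,w\,\mathrm{d}t - \int_a^b (c_1 t + c_0)\,w\,\mathrm{d}t
	= 0,
\]
whence $w = 0$ almost everywhere, that is, $u(t) = c_1 t + c_0$ a.e.\ in $[a,b]$.

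I do not expect a genuine obstacle: the only point needing care is the nondegeneracy of the Gram matrix, which reduces to the strict Cauchy--Schwarz inequality, while everything else is just the ``orthogonal complement of the complement'' mechanism made explicit. The same scheme should generalize verbatim to the higher--order statement (Lemma~\ref{lem:DuBois-k-Riemannian}), replacing $\{1,t\}$ by a basis $\{1,t,\dots,t^{k-1}\}$ of polynomials and the $2\times 2$ Gram matrix by the corresponding nondegenerate $k\times k$ one.
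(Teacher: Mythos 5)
Your proposal is correct and follows essentially the same route as the paper: both decompose $u$ as an affine function plus an element of $W_0$ by solving the same nondegenerate $2\times 2$ linear system for $(c_0,c_1)$, and both rest on the orthogonality of the affine functions to $W_0$. If anything, your write-up is slightly more complete, since you explicitly carry out the final step of testing the hypothesis against $\eta = w$ to get $\int_a^b w^2\,\mathrm{d}t = 0$, which the paper leaves implicit after establishing the direct sum decomposition.
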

\begin{proof}
	Setting 
	\[
		A([a,b],\mathbb{R}) = \big\{u \in L^2([a,b],\mathbb{R}):
			u(t) = c_1 s + c_0,\,
			\text{for some }c_0,c_1\in\mathbb{R}
		\big\},
	\]
	by definition of $W_0([a,b],\mathbb{R})$
	we have that $A([a,b],\mathbb{R})$ and $W_0([a,b],\mathbb{R})$
	are orthogonal in $L^2([a,b],\mathbb{R})$.
	To conclude the proof, let us show that 
	$L^2([a,b],\mathbb{R}) = A([a,b],\mathbb{R}) \oplus W_0([a,b],\mathbb{R})$.
	This can be achieved by proving
	that for every $u \in L^2([a,b],\mathbb{R})$, 
	there exist exactly two real numbers $c_0, c_1$
	such that 
	\[
		u - c_1 t - c_0 \in W_0([a,b],\mathbb{R}).
	\]
	A straightforward computation shows that $c_0,c_1$
	must solve the following system:
	\[
		\begin{dcases}
			c_1 \frac{(b-a)^2}{2} + c_0 (b-a)
			= \int_{a}^b u(t)\, \mathrm{d}t,\\
			c_1 \frac{(b-a)^3}{3} + c_0 \frac{(b-a)^2}{2}
			= \int_{a}^b t\, u(t)\, \mathrm{d}t,
		\end{dcases}
	\]
	whose determinant is different from $0$ if $a\ne b$,
	and so $c_0$ and $c_1$ are unique.
\end{proof}

By directly applying Lemma~\ref{lem:affine-ortho},
we have the following regularity result in the 
one dimensional setting.
\begin{proposition}
	\label{prop:onedim-regularity}
	Let $\gamma \in \Gamma$ be a critical point for $f$,
	where $\Gamma$ is defined by~\eqref{eq:def-X}.
	Then, for every $i=1,\dots,N$
	the restriction $\gamma\big|_{[t_{i-1},t_{i}]}$
	is smooth and 
	\begin{equation}
		\label{eq:Euler-Lagrange}
		\frac{\mathrm{d}^4}{\mathrm{d}t^4}\gamma(t) = 0,
		\qquad \forall t \in (t_{i-1},t_{i}).
	\end{equation}
	Moreover, 
	both $\gamma\big|_{[0,t_j]}$
	and $\gamma\big|_{[t_j,1]}$
	are of class $C^2$;
	if $t_j \ne 0$ then $\ddot{\gamma}(0) = 0$
	and 
	if $t_j \ne 1$ then $\ddot{\gamma}(1) = 0$.
\end{proposition}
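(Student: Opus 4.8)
The plan is to exploit two disjoint families of admissible variations: first the \emph{rigid} ones in $V_0$ to pin down the shape of $\ddot{\gamma}$ on each subinterval, and then the full space $V$ to read off the junction and boundary conditions.

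First I would show that, for a fixed $i$, the assignment $\xi \mapsto \ddot{\xi}|_{[t_{i-1},t_i]}$ maps the variations in $V_0$ supported in $[t_{i-1},t_i]$ \emph{onto} $W_0([t_{i-1},t_i],\mathbb{R})$. Given $\eta \in W_0([t_{i-1},t_i],\mathbb{R})$, I would set $\dot{\xi}(t) = \int_{t_{i-1}}^t \eta$ and $\xi(t) = \int_{t_{i-1}}^t \dot{\xi}$ on $[t_{i-1},t_i]$ and extend by $0$; the two vanishing-moment conditions defining $W_0$ force exactly $\dot{\xi}(t_i) = 0$ and $\xi(t_i) = 0$ (the latter after one integration by parts), so the extension lies in $V_0$. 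Feeding such $\xi$ into $\mathrm{d}f(\gamma)[\xi] = 0$ gives $\int_{t_{i-1}}^{t_i}\ddot{\gamma}\,\eta\,\mathrm{d}t = 0$ for every $\eta \in W_0([t_{i-1},t_i],\mathbb{R})$, whence Lemma~\ref{lem:affine-ortho} yields $\ddot{\gamma}(t) = c_1^{(i)} t + c_0^{(i)}$ a.e.\ on $[t_{i-1},t_i]$. Integrating twice shows $\gamma$ is a cubic polynomial on each $[t_{i-1},t_i]$, hence smooth there with $\gamma^{(4)} \equiv 0$, which proves the first assertion and~\eqref{eq:Euler-Lagrange}.

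Next, knowing $\gamma$ is piecewise cubic (and $C^1$ on $[0,1]$ since $H^2 \hookrightarrow C^1$), I would return to $\mathrm{d}f(\gamma)[\xi] = 0$ for a \emph{general} $\xi \in V$, split the integral over the subintervals, and integrate by parts twice on each piece. Because $\gamma^{(4)} \equiv 0$ and $\xi(t_i) = 0$ at every node, all bulk terms and all $\xi$-boundary terms vanish, leaving the telescoping sum of derivative-boundary contributions
\[
	0 = -\ddot{\gamma}(0^+)\dot{\xi}(0) + \ddot{\gamma}(1^-)\dot{\xi}(1)
	+ \sum_{i=1}^{N-1}\big(\ddot{\gamma}(t_i^-) - \ddot{\gamma}(t_i^+)\big)\dot{\xi}(t_i).
\]
Here the residual freedom is decisive: by solving a Hermite interpolation problem on each subinterval, the values $\dot{\xi}(t_i)$ for $i \ne j$ may be prescribed arbitrarily and independently while keeping $\xi \in V$ (only $\dot{\xi}(t_j) = 0$ is forced). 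Hence every coefficient above must vanish, so $\ddot{\gamma}$ is continuous at each interior node $t_i \ne t_j$, and $\ddot{\gamma}(0) = 0$ (resp.\ $\ddot{\gamma}(1) = 0$) whenever $t_j \ne 0$ (resp.\ $t_j \ne 1$). Since $t_j$ is the only node at which $\ddot{\gamma}$ may jump, the restrictions $\gamma|_{[0,t_j]}$ and $\gamma|_{[t_j,1]}$ are of class $C^2$, completing the proof.

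The main obstacle I anticipate is the bookkeeping that cleanly separates the two variation classes: I must verify that the rigid variations in $V_0$ realize \emph{all} of $W_0$ on a single interval (so that Lemma~\ref{lem:affine-ortho} applies interval by interval), and dually that the nodal derivatives $\dot{\xi}(t_i)$ with $i \ne j$ are genuinely unconstrained once the affine structure of $\ddot{\gamma}$ is in hand. Both reduce to elementary solvability of Hermite-type interpolation, yet they are precisely the steps where the constraint $\dot{\xi}(t_j) = 0$ intervenes and explains why $t_j$ is the single admissible jump point for $\ddot{\gamma}$.
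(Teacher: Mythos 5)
Your proposal is correct and follows essentially the same route as the paper: first use the variations in $V_0$ supported in a single subinterval, identify their second derivatives with $W_0([t_{i-1},t_i],\mathbb{R})$, and apply Lemma~\ref{lem:affine-ortho} to get that $\ddot{\gamma}$ is affine on each piece; then test against general $\xi \in V$ and use the free nodal values $\dot{\xi}(t_i)$, $i \ne j$, to extract the $C^2$ matching and the natural boundary conditions. Your explicit verification that the double primitive of $\eta \in W_0$ vanishes at $t_i$ (via one integration by parts) is a welcome detail that the paper's proof leaves implicit, but the argument is otherwise the same.
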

\begin{proof}
	As an initial step,
	let us select an index $i \in \{1,\dots,N\}$ 
	and demonstrate that the restriction
	of $\gamma$ to $[t_{i-1},t_{i}]$ is smooth. 
	We consider the subset of $V_0$ consisting
	of variations $\xi$ with compact support within
	the interval $[t_{i-1},t_{i}]$. 
	Given that $\gamma$ is a critical point for the functional $f$, 
	it follows that
	\[
		\mathrm{d}f(\gamma)[\xi]
		= \int_{t_{i-1}}^{t_i} \ddot{\gamma}(t)\, \ddot{\xi}(t)\, \mathrm{d}t
		= 0,
	\]
	for every $\xi$ within this particular subset of $V_0$.
	As $\xi$ is an element of $V_0$, 
	according to \eqref{eq:V_0-otherChar}
	$\ddot{\xi}$ belongs to $W_0([t_{i-1},t_{i}],\mathbb{R})$. 
	Conversely, for any $\eta \in W_0([t_{i-1},t_{i}],\mathbb{R})$, 
	there exists a $\xi$ in this subset of $V_0$ such that
	$\ddot{\xi}(t) = \eta(t)$ for all $t \in [t_{i-1},t_{i}]$.
	This equivalence is achieved 
	by integrating $\eta$ twice
	and adopting zero as the integration constants.
	By Lemma~\ref{lem:affine-ortho}
	we obtain two constants
	$c_0^i,c_1^i\in\mathbb{R}$
	such that 
	\[
		\ddot{\gamma}(t) = c_1^i t + c_0^i,
		\qquad \text{a.e. in } (t_{i-1},t_{i}).
	\]
	Therefore, $\gamma\big|_{[t_{i-1},t_{i}]}$
	is smooth and~\eqref{eq:Euler-Lagrange}
	holds for every $i = 0,\dots,N-1$.

	To demonstrate the final part of the proposition, 
	consider any $\xi \in V$. Given that $\gamma \in \Gamma$ is a critical point 
	and exhibits smoothness over each interval $[t_{i-1}, t_{i}]$, 
	integrating by parts within each interval yields 
	to the following equation:
	\begin{equation*}
		\mathrm{d}f(\gamma)[\xi]
		= \sum_{i = 1}^{N}
		\int_{t_{i-1}}^{t_i} \ddot{\gamma}(t)\ddot{\xi}(t)\,
		\mathrm{d}t
		= \sum_{i = 1}^{N}
		\left(
			\ddot{\gamma}(t_{i}^-)\dot\xi(t_{i})
			-
			\ddot{\gamma}(t_{i-1}^+)\dot\xi(t_{i-1})
		\right)
		= 0, 
		\qquad \forall \xi \in V,
	\end{equation*}
	where $\ddot{\gamma}(t^-)$ and $\ddot{\gamma}(t^+)$
	denote the second-order left and right derivatives
	of $\gamma$ at time $t$, respectively.
	The integral terms vanish because
	$\mathrm{d}^3\gamma/\mathrm{d}t^3$ remains constant 
	and $\dot{\xi}$ has a zero mean value
	over each interval.

	Given that for any $i \ne j$, there exists $\xi \in V$ such that 
	$\dot{\xi}(t_i) \ne 0$ and $\dot{\xi}(t_k) = 0$ for all $k \ne i$, 
	it follows that
	\[
		\ddot{\gamma}(t_{i}^-) = \ddot{\gamma}(t_{i}^+),
		\quad \text{if } i \ne j,
	\]
	which indicates that both sections of $\gamma$, specifically 
	$\gamma\big|_{[0,t_j]}$ and $\gamma\big|_{[t_j,1]}$, 
	are of class $C^2$.
	When $t_j$ does not coincide with $0$ and $1$,
	we deduce that $\ddot{\gamma}(0) = 0$ 
	and $\ddot{\gamma}(1) = 0$,
	respectively.
\end{proof}

\section{The regularity result in the Riemannian setting}
\label{sec:Riemannian-regularity}
In this section,
we prove Theorem~\ref{theorem:regularity},
thus generalizing the results achieved by
Proposition~\ref{prop:onedim-regularity}
in the one-dimensional setting
to the more general context of
Riemannian manifolds.

From now on, let $(M^n,g)$ is a smooth
$n$--dimensional complete Riemannian manifold, endowed 
with a smooth metric tensor $g$.
For any absolute continuous curve $\gamma\colon [a,b] \to M$
and any vector field $\eta$ along $\gamma$ of class $L^1$,
we denote by 
\[
	\covI_{a}^b \eta(t)\mathrm{d}t
\]
the \emph{covariant integral of $\eta$ along $\gamma$},
that is the unique vector field $\mu$ along $\gamma$ such that 
$\mu(a) = 0$ and $D_t\mu(t) = \eta(t)$ for every $t\in [a,b]$.
Hence, for any vector field $\xi$
of class $H^1$ along $\gamma$, we have
\begin{equation}
	\label{eq:covariantIntegral}
	\xi(t) = \xi_t(a)
	+ \covI_{a}^{t}\covD_t \xi(\tau)\mathrm{d}\tau,
\end{equation}
where we denote by $\xi_t(a) \in T_{\gamma(t)}M$
the parallel transport of the vector
$\xi(a) \in T_{\gamma(a)}M$
along the curve $\gamma$ at time $t \in [a,b]$.

Let $\Gamma\subset H^2([0,1],M)$ and $f\colon \Gamma \to \mathbb{R}$
be defined as in Theorem~\ref{theorem:regularity}.
Due to the constraints of the curves we are considering,
the set of admissible variations depends on each 
$\gamma \in \Gamma$ and is the following:
\begin{equation}
	\begin{aligned}
		\label{eq:def-V-Riemann}
		V_\gamma \coloneqq \Big\{ \xi \in H^2([0,1],TM) :\, 
		  & \xi(t) \in T_{\gamma(t)}M, \, \forall t \in [0,1], \\
		  & \xi(t_i) = 0,\, \forall i = 0,\dots, N,\, 
		  & \covD_t\xi(t_j) = 0 
	  \Big\}.
	\end{aligned}
\end{equation}
In other words, $V_\gamma$ is the set of
vector fields along $\gamma$
of Sobolev class $H^2$ vanishing at each time $t_i$,
for $i = 0,\dots,N$,
and with first covariant derivative equal to $0$ at time $t_j$.

We are concerning about the regularity and existence 
of the critical points of $f\colon \Gamma \to \mathbb{R}$, 
hence of those curves such that 
\[
	\mathrm{d}f(\gamma)[\xi] = 0,
	\qquad \forall \xi  \in V_\gamma.
\]
By standard arguments (see, e.g.,~\cite{GGP2002}),
it can be proved that the differential 
$\mathrm{d}f(\gamma)\colon V_\gamma \to \mathbb{R}$
reads as follows:
\begin{equation}
	\label{eq:diff-f-Riemannian}
	\mathrm{d}f(\gamma)[\xi]
	= \int_{0}^{1}
	\Big(
		g\big(\covD_t\dot{\gamma},\covD^2_t\,\xi\big)
		- g\big(R\big(\dot{\gamma},\covD_t\dot{\gamma}\big)\dot{\gamma},\xi\big)
	\Big) \mathrm{d}t,
\end{equation}
where $R\colon TM \times TM \times TM \to TM$
denotes the Riemannian curvature tensor
of the manifold.
Even in this case, due to the constraints
on the admissible variations,
we can't employ the DuBois Raymond's lemma to 
prove the regularity of the critical points.
Similarly to the one dimensional case, we proceed as follows.
For every $\gamma \in \Gamma$, let us consider
\[
	V_{\gamma,0} = \left\{\xi\in V_\gamma: \covD_t\,\xi(t_i) = 0,\,
	\forall i = 0,\dots, N\right\},
\]
and let us define the following vector field along $\gamma$,
\begin{equation}
	\label{eq:def-etax}
	\eta_\gamma(t) 
	\coloneqq
	\covI_0^t
	\left( \covI_0^\tau
		R\big(\dot{\gamma},\covD_s\dot{\gamma}\big)\dot{\gamma}\,
		\mathrm{d}s \right)\mathrm{d}\tau,
\end{equation}
so that $\eta_\gamma(t) \in  T_{\gamma(t)}M$ for every
$t \in [0,1]$ and the second order covariant
derivative of $\eta_\gamma$
is $R\big(\dot{\gamma},\covD_t\dot{\gamma}\big)\dot{\gamma}$.
With this notation, by integrating by parts (in the admissible direction)
we obtain
\begin{equation}
	\label{eq:diff-f-Riem-xi0}
	\mathrm{d}f(\gamma)[\xi]
	= \int_{0}^{1}
	g\left(\covD_t\dot{\gamma} - \eta_\gamma(t),
	\mathrm{D}^2_t\,\xi\right)
	\mathrm{d}t,
	\qquad \forall \xi \in V_{\gamma,0}.
\end{equation}
We are going to prove that, if $\mathrm{d}f(\gamma)[\xi] = 0$
for every $\xi \in V_{\gamma,0} \subset V_\gamma$,
in particular if $\gamma$ is a critical point,
then $\gamma$ is smooth on every time interval $[t_i,t_{i+1}]$.
As a first step, we give the following characterization result. 
\begin{lemma}
	\label{lem:characterize-Wx0}
	Let $\gamma \in H^2([a,b],M)$
	and let $\xi \in V_{\gamma,0}([a,b],\mathbb{R})$.
	Then, for every $i = 1,\dots,N$ we have
	\begin{equation}
		\label{eq:characterize-Wx0-firstEq}
		\covI_{t_{i-1}}^{t_i} \covD^2_t\xi\,\mathrm{d}t = 0,
	\end{equation}
	and 
	\begin{equation}
		\label{eq:characterize-Wx0-secondEq}
		\covI_{t_{i-1}}^{t_i} t\, \covD^2_t\xi\, \mathrm{d}t = 0.
	\end{equation}
\end{lemma}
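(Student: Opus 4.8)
The plan is to observe that in each of \eqref{eq:characterize-Wx0-firstEq} and \eqref{eq:characterize-Wx0-secondEq} the integrand is an \emph{exact} covariant derivative of an explicit vector field along $\gamma$ that vanishes at both endpoints $t_{i-1}$ and $t_i$; the vanishing of the covariant integral is then immediate from its defining property together with the boundary data encoded in $V_{\gamma,0}$. Recall that for $\xi \in V_{\gamma,0}$ one has $\xi(t_i) = 0$ \emph{and} $\covD_t\xi(t_i) = 0$ for every $i = 0,\dots,N$, and that $\covI_{t_{i-1}}^{t_i}\eta\,\mathrm{d}t$ is, by definition, the value at $t_i$ of the unique covariant primitive of $\eta$ vanishing at $t_{i-1}$.

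For \eqref{eq:characterize-Wx0-firstEq}, since $\covD^2_t\xi = \covD_t\big(\covD_t\xi\big)$ and $\covD_t\xi(t_{i-1}) = 0$, the field $\covD_t\xi$ is itself the covariant primitive of $\covD^2_t\xi$ vanishing at $t_{i-1}$; hence $\covI_{t_{i-1}}^{t_i}\covD^2_t\xi\,\mathrm{d}t$ equals $\covD_t\xi(t_i) = 0$. For \eqref{eq:characterize-Wx0-secondEq}, the key step is the covariant Leibniz rule applied to the scalar $t$ and the $H^1$ field $\covD_t\xi$, which gives
\[
	\covD_t\big(t\,\covD_t\xi - \xi\big) = \big(\covD_t\xi + t\,\covD^2_t\xi\big) - \covD_t\xi = t\,\covD^2_t\xi .
\]
Thus $t\,\covD_t\xi - \xi$ is a covariant primitive of $t\,\covD^2_t\xi$; at $t_{i-1}$ it equals $t_{i-1}\covD_t\xi(t_{i-1}) - \xi(t_{i-1}) = 0$, so it is the primitive vanishing there, and evaluating at $t_i$ with $\covD_t\xi(t_i) = \xi(t_i) = 0$ yields the second identity.

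The computation is elementary, so I expect no serious obstacle; the only points deserving care are that $\covI_{t_{i-1}}^{t_i}$ returns the value at the \emph{upper} endpoint of the primitive pinned to zero at the \emph{lower} endpoint, so it is the boundary conditions at both $t_{i-1}$ and $t_i$ together that force the integrals to vanish, and that the manipulations are legitimate at the given regularity, where $\covD_t\xi \in H^1$ and $\covD^2_t\xi \in L^2 \subset L^1$ make every integrand admissible for the covariant integral while the Leibniz rule holds in the weak sense. This lemma merely records the two ``moment'' conditions that will subsequently let the generalized DuBois--Reymond Lemma (Lemma~\ref{lem:affine-ortho}) be brought to bear in the Riemannian bootstrap.
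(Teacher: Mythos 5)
Your proof is correct and follows essentially the same route as the paper: both identities come from the fundamental theorem of calculus for the covariant integral together with integration by parts (your explicit antiderivative $t\,\covD_t\xi-\xi$ is just the product rule repackaged), using the vanishing of $\xi$ and $\covD_t\xi$ at $t_{i-1}$ and $t_i$. Your version is, if anything, slightly more streamlined than the paper's, which phrases the second identity via a vanishing double covariant integral before integrating by parts.
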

\begin{proof}
	Since $\covD_t\xi(t_i) = 0$ for any $i = 0,\dots,N$,
	~\eqref{eq:characterize-Wx0-firstEq} 
	directly follows from~\eqref{eq:covariantIntegral}.
	Now, let us fix $i = 1,\dots, N$.
	Since $\xi(t_{i-1}) = \xi(t_i)  = 0$,
	and $\covD_t\xi(t_{i-1}) = 0$,
	we have
	\[
		0 = 
		\covI_{t_{i-1}}^{t_i}
		\covD_t \xi(t)\mathrm{d}t
		=
		\covI_{t_{i-1}}^{t_i}
		\left(
			\covI_{t_{i-1}}^{t}
			\covD^2_\tau\xi(\tau)
			\mathrm{d}\tau
		\right)
		\mathrm{d}t.
	\]
	Then, by using an integration by parts, we
	obtain
	\[
		\covI_{t_{i-1}}^{t_i}
		t\,
		\covD_\tau^2\xi(t)
		\mathrm{d}t
		= 
		-
		\covI_{t_{i-1}}^{t_i}
		\left(
			\covI_{t_i}^{t}
			\covD^2_\tau\xi(\tau)
			\mathrm{d}\tau
		\right)
		\mathrm{d}t = 0.
	\]
\end{proof}

As a consequence of the previous lemma,
let us introduce the following notation.
Let $[a,b]\subset \mathbb{R}$ and let $\gamma\colon [a,b] \to M$
be a curve of class $H^2$.
Defining $W_{\gamma,0}([a,b],M)$ as follows,
\[
	W_{\gamma,0}([a,b],M)
	\coloneqq\Big\{
		\eta\in L^2([a,b],TM):\,
		\eta(t) \in T_{\gamma(t)}M,\,
		\covI_a^b \eta(t)\,\mathrm{d}t = 0,
		\covI_a^b t,\ \eta(t)\,\mathrm{d}t = 0
	\Big\},
\]
we are ready to state the generalization of 
the DuBois-Reymond lemma in the Riemannian setting.

\begin{lemma}
	\label{lem:second-order-DuBois-Riemann}
	If $\mu\colon [a,b] \to TM$ is a $L^1$ vector
	field along a continuous curve $\gamma\colon [a,b] \to M$ such that 
	\[
		\int_{a}^{b}g\left(\mu,\eta\right)\mathrm{d}t
		= 0,
		\qquad \forall \eta \in W_{\gamma,0}([a,b],M),
	\]
	then there exist two parallel vector fields
	$\nu$ and $\zeta$
	along $\gamma$ such that 
	\begin{equation}
		\label{eq:mu-affine-comb}
		\mu(t) = \nu(t) + t \zeta(t),
		\qquad \text{a.e. in }(a,b).
	\end{equation}
\end{lemma}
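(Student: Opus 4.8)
The plan is to reduce the Riemannian statement to the already-proved scalar Lemma~\ref{lem:affine-ortho} by trivializing the tangent bundle along $\gamma$ with a parallel orthonormal frame. Concretely, I would fix an orthonormal basis of $T_{\gamma(a)}M$ and parallel-transport it along $\gamma$ to obtain vector fields $e_1(t),\dots,e_n(t)$ with $\covD_t e_k \equiv 0$ and $g(e_h,e_k)=\delta_{hk}$ for all $t$. Writing $\mu(t)=\sum_k \mu^k(t)\,e_k(t)$ and $\eta(t)=\sum_k \eta^k(t)\,e_k(t)$, the frame being parallel and orthonormal gives three facts I would record at the outset: the metric pairing becomes the Euclidean one, $g(\mu,\eta)=\sum_k \mu^k\eta^k$; the covariant derivative acts componentwise, $\covD_t \xi = \sum_k \dot{\xi}^k e_k$; and consequently the covariant integral integrates components, so that $\covI_a^b \eta\,\mathrm{d}t = \sum_k\big(\int_a^b \eta^k\,\mathrm{d}t\big) e_k(b)$, and likewise for $t\eta$.

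With these identities, the second step is to translate the constraint space $W_{\gamma,0}$. Since $\{e_k(b)\}$ is a basis, $\covI_a^b \eta\,\mathrm{d}t=0$ is equivalent to $\int_a^b \eta^k\,\mathrm{d}t=0$ for every $k$, and $\covI_a^b t\,\eta\,\mathrm{d}t=0$ is equivalent to $\int_a^b t\,\eta^k\,\mathrm{d}t=0$ for every $k$. Hence $\eta\in W_{\gamma,0}([a,b],M)$ if and only if each scalar component $\eta^k$ lies in $W_0([a,b],\mathbb{R})$, and the hypothesis becomes $\sum_k \int_a^b \mu^k\eta^k\,\mathrm{d}t=0$ for all such $\eta$.

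The decisive observation is that the components decouple: for a fixed index $k$, choosing $\eta$ with $\eta^k$ an arbitrary element of $W_0([a,b],\mathbb{R})$ and all other components equal to $0$ produces an admissible $\eta\in W_{\gamma,0}$, and the hypothesis then reads $\int_a^b \mu^k\eta^k\,\mathrm{d}t=0$ for every $\eta^k\in W_0([a,b],\mathbb{R})$. Applying Lemma~\ref{lem:affine-ortho} for each $k$ yields constants $c_0^k,c_1^k$ with $\mu^k(t)=c_1^k t + c_0^k$ a.e. Setting $\nu(t)\coloneqq \sum_k c_0^k e_k(t)$ and $\zeta(t)\coloneqq \sum_k c_1^k e_k(t)$, both fields have constant components in a parallel frame and are therefore parallel, and $\mu(t)=\nu(t)+t\,\zeta(t)$ a.e., as required.

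The part that needs care, rather than cleverness, is the regularity bookkeeping. First, the frame $e_k$ exists and is at least as regular as $\gamma$, since parallel transport along $\gamma$ is an orthogonal linear isomorphism $T_{\gamma(a)}M\to T_{\gamma(t)}M$ depending continuously (indeed absolutely continuously) on $t$; being a fiberwise isometry, it preserves $L^p$ norms, so the components $\mu^k$ inherit the integrability of $\mu$ and, conversely, the componentwise-built test fields $\eta$ genuinely lie in $L^2$. This is what justifies passing between the vector identities and their scalar shadows, and what lets the pairing $g(\mu,\eta)$ be integrated term by term. The only point to flag is the invocation of Lemma~\ref{lem:affine-ortho}, stated for $L^2$ scalars: in every application here $\mu$ is in fact $L^2$ (it arises as $\covD_t\dot\gamma-\eta_\gamma$ with $\covD_t\dot\gamma\in L^2$ and $\eta_\gamma\in H^2$), so the components are $L^2$ and the lemma applies verbatim; for a genuinely $L^1$ field one would instead appeal to the $L^1$ version of the same orthogonality argument.
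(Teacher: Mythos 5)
Your proof is correct, but it takes a genuinely different route from the paper's. The paper argues intrinsically: it shows that the affine combinations $\nu(t)+t\zeta(t)$ of parallel fields form an orthogonal complement to $W_{\gamma,0}([a,b],M)$ in $L^2$, by checking orthogonality via integration by parts against the covariant integral and then solving a $2\times 2$ linear system for the determining vectors $\nu_b,\zeta_b\in T_{\gamma(b)}M$ --- in effect it re-runs the structure of the scalar proof of Lemma~\ref{lem:affine-ortho} directly in the tangent spaces, without ever choosing a frame. You instead trivialize $TM$ along $\gamma$ by a parallel orthonormal frame, observe that $\covD_t$ and $\covI$ become componentwise $\frac{\mathrm{d}}{\mathrm{d}t}$ and $\int$, that $W_{\gamma,0}$ becomes the $n$-fold product of $W_0([a,b],\mathbb{R})$, and that the components decouple, so the scalar Lemma~\ref{lem:affine-ortho} applies verbatim to each $\mu^k$. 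Your approach buys economy and transparency --- the scalar lemma is reused as a black box and the decoupling is explicit --- at the cost of introducing a frame; the paper's intrinsic argument avoids any choice of trivialization and is the template it then extends inductively to the higher-order Lemma~\ref{lem:DuBois-k-Riemannian}, where the same frame reduction would also work. Two small points, both of which you handle appropriately: parallel transport (hence your frame, but equally the paper's covariant integral) requires $\gamma$ to be at least absolutely continuous rather than merely continuous, which holds in every application since $\gamma\in H^2$; and the mismatch between the $L^1$ hypothesis on $\mu$ and the $L^2$ orthogonality argument is present in the paper's own proof as well, and your remark that $\mu=\covD_t\dot\gamma-\eta_\gamma\in L^2$ in all uses resolves it.
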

\begin{proof}
	By definition of $W_{\gamma,0}$, any $\mu$ given
	by~\eqref{eq:mu-affine-comb},
	with $\nu$ and $\zeta$ two arbitrary parallel
	vector fields along $\gamma$,
	is orthogonal to $W_{\gamma,0}$.
	Indeed, for any $\eta \in W_{\gamma,0}$ and
	a vector field $\nu$ along the curve $\gamma$ 
	with zero covariant derivative we have
	\[
		\int_{a}^{b} g(\nu,\eta)\mathrm{d}t
		= g\Big(\nu(b),\covI_a^b \eta\,\mathrm{d}t\Big)
		- \int_{a}^b g\Big(\covD_t\nu,
		\covI_a^t\eta\,\mathrm{d}s\Big)\mathrm{d}t
		= 0,
	\]
	since the both terms vanish.
	Similarly, 
	for any vector field $\zeta$ along $\gamma$ 
	with zero covariant derivative we obtain
	\begin{equation*}
		\int_{a}^{b} g( t \zeta,\eta)\mathrm{d}t
		=
		\int_{a}^{b} g(  \zeta,t\eta)\mathrm{d}t
		= g\Big(\zeta(b),\covI_a^b t \eta\,\mathrm{d}t\Big)
		- \int_a^bg\Big(\covD_t\zeta,\covI_a^t s
		\eta\,\mathrm{d}s\Big)\mathrm{d}t 
		= 0.
	\end{equation*}
	Therefore, it remains to prove that any
	arbitrary vector field $\rho$ along $\gamma$ can be obtained
	as a direct sum of an element in $W_{\gamma,0}$
	and one given by~\eqref{eq:mu-affine-comb}.
	In other words, we need to show the existence
	and uniqueness of two parallel
	vector fields $\nu$ and $\zeta$ along $\gamma$ such that
	$\rho - \nu - t\zeta$ belongs to $W_{\gamma,0}$,
	namely
	\[
		\covI_{a}^b (\nu + t\zeta) \mathrm{d}t 
		= \covI_{a}^b \rho\,\mathrm{d}t
		\qquad\text{and}\qquad
		\covI_a^b(t\nu + t^2\zeta) \mathrm{d}t 
		= \covI_{a}^b t \rho\,\mathrm{d}t.
	\]
	Denoting by $\nu_b,\zeta_b \in T_{\gamma(b)}M$
	the two vectors that uniquely determined the 
	parallel vector fields $\nu$ and $\zeta$, respectively,
	we have that $\rho - \nu - t\zeta$
	belongs to $W_{\gamma,0}$ if and only if there exist
	$\nu_b,\zeta_b \in T_{\gamma(b)}M$ such that
	\[
		\begin{dcases}
			(b-a)\nu_b + \frac{(b-a)^2}{2}\zeta_b
			= \covI_{a}^b \rho\,\mathrm{d}t,\\
			\frac{(b-a)^2}{2}\nu_b + \frac{(b-a)^3}{3}\zeta_b
			= \covI_{a}^b t \rho\,\mathrm{d}t.
		\end{dcases}
	\]
	Since the last system uniquely provides the two vectors $\nu_b,\zeta_b$
	(unless $b \ne a$), the thesis is obtained.
\end{proof}

\begin{proof}[Proof of Theorem~\ref{theorem:regularity}]
	The main steps of the proof are the same as those of
	Proposition~\ref{prop:onedim-regularity};
	however, in this setting, we can appreciate the iterative procedure
	of the bootstrap method.
	
	Let us fix a time interval
	$[t_{i-1},t_i]$, with $i = 1,\dots,N$.
	Since $\gamma \in \Gamma$ is a critical point,
	for any $\xi \in V_{\gamma,0}$
	with compact support in $(t_{i-1},t_i)$,
	by~\eqref{eq:diff-f-Riem-xi0}
	we have
	\[
		\mathrm{d}f(\gamma)[\xi] 
		= \int_{t_{i-1}}^{t_i}
		g\left(\covD_t\dot{\gamma} - \eta_\gamma(t),
		\mathrm{D}^2_t\,\xi\right)
		\mathrm{d}t = 0,
	\]
	where we recall that $\eta_{\gamma}$
	is the vector field along $\gamma$
	given by~\eqref{eq:def-etax}.
	Using Lemma~\ref{lem:characterize-Wx0},
	we have that $\covD^2_t\xi \in W_{\gamma,0}([t_{i-1},t_i],M)$.
	Therefore,
	by the generality of $\xi$
	and using Lemma~\ref{lem:second-order-DuBois-Riemann},
	we have the existence of two parallel
	vector fields $\nu^i,\zeta^i$ along $\gamma|_{[t_{i-1},t_i]}$
	such that
	\begin{equation}
		\label{eq:bootstrap-Riemannian}
		\covD_t\dot{\gamma}(t) - \eta_{\gamma}(t) = \nu^i(t) + t\zeta^i(t),
		\qquad \text{a.e. in } (t_{i-1},t_i).
	\end{equation}
	Now, the iterative procedure of the bootstrap
	method can start.
	Since $\eta_{\gamma}$ is of class $H^2$
	and $\nu^i$ and $\zeta^i$ are parallel vector fields,
	from the previous equation we have
	that $\covD_t\dot\gamma$ has $H^2$--regularity
	on $[t_{i-1},t_i]$,
	hence $\gamma$ is of class $H^4$
	on this interval.
	Since by definition of $\eta_\gamma$
	we have
	\begin{equation}
		\label{eq:bootstrap-Riemannian-2}
		\covD^2_t\eta_\gamma(t)
		= R\big(\dot{\gamma}(t),\covD_t\dot{\gamma}(t)\big)
		\dot{\gamma}(t),
		\qquad \forall t \in [t_{i-1},t_i],
	\end{equation}
	and $\gamma|_{[t_{i-1},t_i]}$ is of class $H^4$,
	we have that $\eta_{\gamma}$
	is actually of class $H^4$ and,
	by using again~\eqref{eq:bootstrap-Riemannian},
	$\gamma|_{[t_{i-1},t_i]}$ is of class $H^6$.
	By applying alternatively~\eqref{eq:bootstrap-Riemannian}
	and~\eqref{eq:bootstrap-Riemannian-2},
	we have that $\gamma|_{[t_{i-1},t_i]}$
	belongs to $H^{2k}([t_{i-1},t_i],M)
	\subset C^{2k-1}([t_{i-1},t_i],M)$
	for every $k \in \mathbb{N}$,
	hence it is smooth on $[t_{i-1},t_i]$.
	Since the above procedure can be applied
	for every $i = 1,\dots,N$,
	the desired regularity result is achieved.
	As a consequence, by taking the second order
	covariant derivative on both sides
	of~\eqref{eq:bootstrap-Riemannian},
	we obtain the spline equation~\eqref{eq:Riemannian-spline}.
	
	To prove the last part of the theorem,
	it suffices to compute the first variation
	of $\gamma$ for every vector field $\xi \in V_{\gamma}$.
	By the above regularity result,
	we can integrate by parts the first variation formula
	and, recalling that $\xi(t_i) = 0$
	for every $i = 0,\ldots,N$,
	and that~\eqref{eq:Riemannian-spline}
	holds on every interval $[t_{i-1},t_i]$,
	we obtain
	\begin{multline}
		\mathrm{d}f(\gamma)[\xi]
		=
		\sum_{i = 1}^N
		\int_{t_{i-1}}^{t_i}
		\Big(
		g\big(\covD_t\dot{\gamma},\covD^2_t\,\xi\big)
		- g\big(R\big(\dot{\gamma},\covD_t\dot{\gamma}\big)\dot{\gamma},\xi\big)
		\Big) \mathrm{d}t
		\\
		=
		\sum_{i = 1}^N
		g\big(\covD_t\dot{\gamma},\covD_t\xi\big)\big|_{t_{i-1}}^{t_i}
		-
		\int_{t_{i-1}}^{t_i}
		\Big(
		g\big(\covD^2_t\dot{\gamma},\covD_t\,\xi\big)
		+ g\big(R\big(\dot{\gamma},\covD_t\dot{\gamma}\big)\dot{\gamma},\xi\big)
		\Big) \mathrm{d}t
		\\
		=
		\sum_{i = 1}^N
		g\big(\covD_t\dot{\gamma},\covD_t\xi\big)\big|_{t_{i-1}}^{t_i}
		+
		\int_{t_{i-1}}^{t_i}
		g\big(\covD^3_t\dot{\gamma} -
		R\big(\dot{\gamma},\covD_t\dot{\gamma}\big)\dot{\gamma},\xi\big)
		\mathrm{d}t
		\\
		=
		\sum_{i = 1}^{N}
		\left(
		g\big(\covD_t\dot\gamma(t_i^-),\covD_t\xi(t_i^-)\big)
		-
		g\big(\covD_t\dot\gamma(t_{i-1}^+),\covD_t\xi(t_{i-1}^+)\big)
		\right)
		= 0.
	\end{multline}
	Since for every index $i \ne j$, $i \ne 0,N$, we can choose 
	$\xi \in V_{\gamma}$ such that $\covD_t\xi$ 
	does not vanish only at that index,
	for such a variation we obtain
	\[
		g\big(\covD_t\dot{\gamma}(t_i^-) - \covD_t\dot{\gamma}(t_i^+),
		\covD_t\xi(t_i)\big) = 0,
	\]
	and by the arbitrariness of $\xi$ we obtain 
	the joining condition $\covD_t\dot\gamma(t_i^-) = \covD_t\dot\gamma(t_i^+)$,
	for every $i \ne j$.
	This implies that both $\gamma|_{[0,t_j]}$
	and $\gamma|_{[t_j,1]}$ are of class $C^2$.
	Finally, if $j \ne 0$ or $j \ne 1$, then an analogous argument shows
	that $\covD_t\dot{\gamma}(0) = 0$
	or $\covD_t\dot{\gamma}(1) = 0$, respectively,
	and we are done.
\end{proof}

\section{Regularity of $k$--splines}
\label{sec:k-regularity}
This section generalizes the previous regularity 
results for the $k$--splines, for every integer $k \ge 2$.
As we did for the splines,
at first we show it in the one dimensional setting,
where the notation is not overwhelming,
and then we give it in the Riemannian setting,
thus proving Theorem~\ref{theorem:regularity-k-Riemann}.

For the more general case of the $k$--splines,
Lemma~\ref{lem:second-order-DuBois-Riemann} is not sufficient
to start the bootstrap method if $k \ge 3$, hence a generalization
of that result is required to obtained the desired regularity,
which in the one dimensional is the following.
\begin{lemma}
	\label{lem:DuBois-k}
	Let $k$ be a non-negative integer.
	Let $u \in L^1([a,b],\mathbb{R})$
	such that
	\[
		\int_a^b u(t)\eta(t)\mathrm{d}t = 0,
	\]
	for any $\eta \in C^{\infty}_c((a,b),\mathbb{R})$
	that satisfies
	\[
		\int_a^b t^\ell \eta(t)\mathrm{d}t = 0,
		\qquad \forall \ell = 0,\dots,k.
	\]
	Then, there exist $k+1$ constants
	$c_0,\dots,c_k \in \mathbb{R}$
	such that
	\[
		u(t) = c_k t^k + \dots + c_1 t + c_0
		= \sum_{\ell = 0}^k c_\ell t^\ell,
		\qquad \text{a.e. in }[a,b].
	\]
\end{lemma}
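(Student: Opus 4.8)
The plan is to prove Lemma~\ref{lem:DuBois-k} by the same orthogonality-decomposition strategy used in Lemma~\ref{lem:affine-ortho}, simply replacing the two-dimensional space of affine functions with the $(k+1)$-dimensional space of polynomials of degree at most $k$. Let $P_k([a,b],\mathbb{R})$ denote the space of polynomials of degree at most $k$, and let
\[
	W_k([a,b],\mathbb{R}) \coloneqq \Big\{ \eta \in L^2([a,b],\mathbb{R}) : \int_a^b t^\ell \eta(t)\,\mathrm{d}t = 0, \ \forall \ell = 0,\dots,k \Big\}.
\]
By the very definition of $W_k$, these two subspaces are orthogonal in $L^2([a,b],\mathbb{R})$, since orthogonality of a polynomial $\sum_{\ell=0}^k c_\ell t^\ell$ against $\eta \in W_k$ amounts to $\sum_{\ell=0}^k c_\ell \int_a^b t^\ell \eta\,\mathrm{d}t = 0$. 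The heart of the argument is to show that $L^2([a,b],\mathbb{R}) = P_k([a,b],\mathbb{R}) \oplus W_k([a,b],\mathbb{R})$, so that the orthogonal complement of $W_k$ is exactly $P_k$; the hypothesis then forces $u$ into $P_k$.

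First I would address the passage from test functions in $C^\infty_c((a,b),\mathbb{R})$ to the full space $W_k([a,b],\mathbb{R})$ of $L^2$ functions. The hypothesis is stated only for smooth, compactly supported $\eta$, whereas the decomposition argument naturally lives in $L^2$. I would argue that $C^\infty_c$ functions satisfying the $k+1$ moment conditions are dense in $W_k$: given $\eta \in W_k$, one approximates it in $L^2$ by mollification and then subtracts a suitable polynomial (supported away from the endpoints via a fixed bump) to restore the vanishing moments, the correction being controllable because the moment map is continuous and surjective onto $\mathbb{R}^{k+1}$. By this density and continuity of the pairing $\eta \mapsto \int_a^b u\eta\,\mathrm{d}t$ on $L^2$ (using $u \in L^1$; one may reduce to $u \in L^2$ by a localization argument, or note the pairing extends appropriately), the hypothesis upgrades to $\int_a^b u\eta = 0$ for all $\eta \in W_k$, meaning $u \perp W_k$.

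Next I would establish the direct-sum decomposition. Given any $\rho \in L^2([a,b],\mathbb{R})$, I seek coefficients $c_0,\dots,c_k$ such that $\rho - \sum_{\ell=0}^k c_\ell t^\ell \in W_k$, i.e.\ such that for each $m = 0,\dots,k$,
\[
	\sum_{\ell = 0}^{k} c_\ell \int_a^b t^{\ell + m}\,\mathrm{d}t = \int_a^b t^m \rho(t)\,\mathrm{d}t.
\]
This is a linear system whose coefficient matrix is the Gram matrix of the monomials $1, t, \dots, t^k$ with respect to the $L^2([a,b],\mathbb{R})$ inner product. Being a Gram matrix of linearly independent functions, it is positive definite, hence invertible, so the $c_\ell$ exist and are unique. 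This is the step generalizing the $2\times 2$ system of Lemma~\ref{lem:affine-ortho}, and it is the technical core; the main obstacle is not the invertibility itself but verifying that the generalization of the two-moment condition to $k+1$ moments interacts correctly with the density step above, so that the orthogonality hypothesis really does pin $u$ down to a degree-$k$ polynomial rather than merely to the closure of $P_k$. Since $u \perp W_k$ and $L^2 = P_k \oplus W_k$ with $P_k$ finite-dimensional (hence closed), we conclude $u \in P_k$, i.e.\ $u(t) = \sum_{\ell=0}^k c_\ell t^\ell$ a.e.\ in $[a,b]$, completing the proof.
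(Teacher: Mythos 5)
Your strategy is genuinely different from the paper's. The paper proves Lemma~\ref{lem:DuBois-k} by induction on $k$: the base case is the classical DuBois-Reymond Lemma, and the inductive step takes an admissible $\xi$ (vanishing moments up to order $k$), fixes a bump $\phi$ with $\int_a^b\phi=1$, and corrects $\xi$ to $\eta=\xi+c\,\phi^{(k+1)}$ so that the $(k+1)$-st moment also vanishes; pairing $u$ with $\eta$ and peeling off the explicitly constructed top coefficient $c_{k+1}$ reduces the statement to the case $k$. You instead globalize the orthogonal-decomposition argument of Lemma~\ref{lem:affine-ortho}: $P_k\perp W_k$ by definition, $L^2=P_k\oplus W_k$ because the Gram (Hankel) matrix $\big(\int_a^b t^{\ell+m}\,\mathrm{d}t\big)_{\ell,m}$ of the linearly independent monomials is positive definite, and a density step upgrades the hypothesis from smooth compactly supported test functions to all of $W_k$. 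Both the Gram-matrix step and the density step (mollify, then restore the $k+1$ moments by a fixed finite-dimensional correction) are sound. Your approach is arguably more transparent and is the natural continuation of the paper's own Lemmas~\ref{lem:affine-ortho} and~\ref{lem:second-order-DuBois-Riemann}; the paper's induction has the advantage of producing the coefficients constructively and of never leaving the duality between $L^1$ and bounded test functions.

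That last point is where your write-up has a genuine gap: the lemma assumes only $u\in L^1([a,b],\mathbb{R})$, and the pairing $\eta\mapsto\int_a^b u\eta\,\mathrm{d}t$ is \emph{not} continuous with respect to the $L^2$ topology on $\eta$ unless $u\in L^2$. Neither of your suggested escapes works as stated: an $L^1$ function is not locally $L^2$, so there is no ``localization'' reducing to $u\in L^2$, and the pairing does not ``extend'' to $L^2$ test functions. For the paper's application this is harmless, since there $u=\gamma^{(k)}\in L^2$ and your argument goes through verbatim. To prove the lemma as stated you should avoid passing to $L^2$-limits of test functions: for instance, first choose (via the same Gram matrix, using only that $t^m u\in L^1$) the unique $p\in P_k$ with $\int_a^b t^m(u-p)\,\mathrm{d}t=0$ for $m=0,\dots,k$; then decompose an arbitrary $\psi\in C^\infty_c((a,b))$ as $\psi=\eta+\sum_{\ell}a_\ell(\psi)\psi_\ell$ with $\eta$ admissible and $\psi_0,\dots,\psi_k$ fixed bumps whose moment matrix is invertible, and conclude that $u-p$ acts on $C^\infty_c$ as integration against a polynomial of degree at most $k$ that has vanishing moments, hence is zero. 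Alternatively, the paper's induction (or the observation that the admissible $\eta$ are exactly the $(k+1)$-st derivatives of $C^\infty_c$ functions, so that $u^{(k+1)}=0$ distributionally) handles $u\in L^1$ directly.
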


\begin{proof}
	The proof relies on an induction argument.
	When $k = 0$, the statement
	is nothing but the DuBois-Reymond Lemma.

	Assume that the thesis holds for $k$
	and let us show it for $k + 1$.
	Let $\xi \in C^{\infty}_c((a,b),\mathbb{R})$
	be such that
	\[
		\int_a^b t^\ell \xi(t)\mathrm{d}t = 0,
		\qquad \forall \ell = 0,\dots,k,
	\]
	and we fix a function $\phi \in C^{\infty}_c((a,b),\mathbb{R})$
	such that $\int_a^b \phi(t)\mathrm{d}t = 1$.
	Moreover, let us define
	\[
		c =
		\frac{(-1)^{k}}{(k+1)!}
		\int_a^b t^{k+1}\xi(t)\mathrm{d}t \in \mathbb{R}.
	\]
	
	Then, we define $\eta \in C^{\infty}_c((a,b),\mathbb{R})$
	as follows:
	\[
		\eta(t) = \xi(t) + c\, \phi^{(k+1)}(t),
	\]
	where
	\[
		\phi^{(k+1)}(t) = \frac{\mathrm{d}^{k+1}}{\mathrm{d}t^{k+1}}\phi(t).
	\]
	By using the integration by parts formula
	$\ell$ times, for every $\ell = 0,\dots,k$
	we have
	\begin{multline*}
		\int_a^b t^\ell \eta(t)\mathrm{d}t
		= 
		\int_a^b t^\ell \xi(t)\mathrm{d}t
		+ c 
		\int_a^b t^\ell \phi^{(k+1)}(t)\mathrm{d}t\\
		= (-1)^{\ell}
		c\,\ell! \int_a^b \phi^{(k+1-\ell)}(t)
		\mathrm{d}t
		= (-1)^{\ell}
		c\,\ell! \big(\phi^{(k-\ell)}(b) - 
			\phi^{(k-\ell)}(a)  \big)
		=  0.
	\end{multline*}
	By the same procedure and by definition of $c$,
	we obtain
	\begin{multline*}
		\int_a^b t^{k+1}\eta(t)\mathrm{d}t
		= 
		\int_a^b t^{k+1}\xi(t)\mathrm{d}t
		+ c 
		\int_a^b t^{k+1} \phi^{(k+1)}(t)\mathrm{d}t\\
		=
		\int_a^b t^{k+1}\xi(t)\mathrm{d}t
		+
		(-1)^{k+1}
		c\,(k+1)! \int_a^b \phi(t)
		\mathrm{d}t
		= \int_a^b t^{k+1}\xi(t)\mathrm{d}t
		+
		(-1)^{k+1} c\,(k+1)!
		= 0.
	\end{multline*}
	Hence, we have that $\int_a^b t^\ell \eta(t)\mathrm{d}t = 0$
	for any $\ell = 0,\dots, k+1$,
	and by hypothesis we have  
	\[
		\int_a^b u(t)\eta(t)\mathrm{d}t = 0.
	\]
	Setting, 
	\[
		c_{k+1} = - \frac{(-1)^{k}}{(k+1)!}\int_a^b u(t)\phi^{(k+1)}(t)\mathrm{d}t
	\]
	we can expand the previous equality as follows:
	\begin{multline*}
		\int_a^b u(t)\eta(t)\mathrm{d}t 
		= \int_a^b u(t) \xi(t)\mathrm{d}t
		+ c \int_a^b u(t)\phi^{(k+1)}(t)\mathrm{d}t\\
		= \int_a^b u(t) \xi(t)\mathrm{d}t +
		\left(
		\frac{(-1)^{k}}{(k+1)!}
		\int_a^b t^{k+1}\xi(t)\mathrm{d}t
	\right)\int_a^b u(t)\phi^{(k+1)}(t)\mathrm{d}t\\
	=
	\int_a^b \big(u(t) - c_{k+1}t^{k+1}\big)\xi(t)\mathrm{d}t
	= 0.
	\end{multline*}
	By the arbitrariness of $\xi$ and by the induction hypothesis, 
	we then obtain the existence of $k+1$ constants
	$c_0,\dots,c_k \in \mathbb{R}$ such that
	\[
		u(t) - c_{k+1}t^{k+1} = c_k t^k + \dots + c_0,
		\qquad \text{a.e. in }(a,b).
	\]
	Therefore, the thesis holds also for $k+1$
	and this ends the proof.
\end{proof}

Thanks to the previous generalization, we have the following
regularity result in the one dimensional setting.
\begin{proposition}
	\label{prop:regularity-k}
	Let $k$ be a positive integer
	and let $v_1,\dots,v_{k-1}\in \mathbb{R}$.
	Let us also fix $p_0,\dots,p_N \in \mathbb{R}$
	for some positive integer $N$
	and let us fix $j \in \{0,\dots,N\}$.
	Setting 
	\[
	\Gamma_k =
	\left\{\gamma \in H^{k}([0,1],\mathbb{R}):
		\gamma(t_i) = p_i, \, \forall i = 0, \ldots, N,
		\gamma^{(\ell)}(t_j) = v_\ell,
		\, \forall \ell = 1,\dots,k-1
	\right\}
	\]
	and $f_k\colon \Gamma_k \to \mathbb{R}$ as follows:
	\[
		f_k(\gamma) = \int_0^1
		|\gamma^{(k)}(t)|^2\mathrm{d}t.
	\]
	If $\gamma$ is a critical point of $f_k$,
	then, for every $i=1,\dots,N$
	the restriction $\gamma\big|_{[t_{i-1},t_{i}]}$
	is smooth and 
	\begin{equation}
		\label{eq:Euler-Lagrange-k}
		\frac{\mathrm{d}^{2k}}{\mathrm{d}t^{2k}}\gamma(t) = 0,
		\qquad \forall t \in [t_{i-1},t_{i}].
	\end{equation}
	Moreover, 
	both $\gamma\big|_{[0,t_j]}$
	and $\gamma\big|_{[t_j,1]}$
	are of class $C^{2k-2}$;
	if $t_j \ne 0$ then $\gamma^{(k+\ell-1)}(0) = 0$
	for every $\ell = 1,\dots,k-1$,
	and similarly if $t_j \ne 1$.
\end{proposition}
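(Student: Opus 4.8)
The plan is to follow the same three-stage scheme used in the proof of Proposition~\ref{prop:onedim-regularity}, replacing the two moment conditions there by the full family of moments up to order $k-1$ and invoking Lemma~\ref{lem:DuBois-k} in place of Lemma~\ref{lem:affine-ortho}. First I would record the first variation, which by differentiating under the integral sign reads $\mathrm{d}f_k(\gamma)[\xi] = 2\int_0^1 \gamma^{(k)}\xi^{(k)}\,\mathrm{d}t$, and identify the admissible variations as $V = \{\xi \in H^k([0,1],\mathbb{R}): \xi(t_i)=0\ \forall i,\ \xi^{(\ell)}(t_j)=0\ \forall \ell=1,\dots,k-1\}$, together with the smaller space $V_0$ obtained by additionally imposing $\xi^{(\ell)}(t_i)=0$ for every node $i$ and every $\ell=1,\dots,k-1$.

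Next I would establish the characterization analogous to Lemma~\ref{lem:characterize-V}: for $\xi\in V_0$ supported in $(t_{i-1},t_i)$, integrating by parts $m+1$ times and using that all derivatives of $\xi$ up to order $k-1$ vanish at $t_{i-1}$ and $t_i$ shows $\int_{t_{i-1}}^{t_i} t^m \xi^{(k)}\,\mathrm{d}t = 0$ for $m=0,\dots,k-1$ (the derivative $\frac{\mathrm{d}^{m+1}}{\mathrm{d}t^{m+1}}t^m$ vanishes, killing the bulk term, while every boundary term carries a factor $\xi^{(\ell)}$ with $0\le\ell\le k-1$). Conversely, any $\eta\in C^\infty_c((t_{i-1},t_i))$ with these vanishing moments lifts to such a $\xi$ by taking its $k$-fold primitive with zero initial data, the vanishing moments being precisely what forces the primitive and its first $k-1$ derivatives to vanish also at $t_i$. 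Hence, fixing $i$ and testing the critical condition against variations supported in $(t_{i-1},t_i)$, the function $u=\gamma^{(k)}\in L^2\subset L^1$ satisfies $\int u\eta\,\mathrm{d}t=0$ for every $\eta\in C^\infty_c$ with vanishing moments up to order $k-1$; Lemma~\ref{lem:DuBois-k}, applied with $k-1$ in place of $k$, then yields constants with $\gamma^{(k)}(t)=\sum_{\ell=0}^{k-1}c_\ell^i t^\ell$ a.e., so $\gamma|_{[t_{i-1},t_i]}$ is a polynomial of degree at most $2k-1$, proving both smoothness and~\eqref{eq:Euler-Lagrange-k}.

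For the matching and boundary conditions I would return to an arbitrary $\xi\in V$ and, exploiting the smoothness just obtained, integrate by parts $k$ times on each subinterval. Since $\gamma^{(2k)}\equiv 0$ there, the bulk integral disappears and one is left with the boundary sum $\sum_{i=1}^N\sum_{\ell=0}^{k-1}(-1)^{k-1-\ell}\big[\gamma^{(2k-1-\ell)}\xi^{(\ell)}\big]_{t_{i-1}}^{t_i}$. Collecting the contributions at each interior node $t_i$ with $i\ne j$, where $\xi(t_i)=0$ but $\xi^{(\ell)}(t_i)$ is free for $\ell=1,\dots,k-1$, the vanishing of $\mathrm{d}f_k(\gamma)[\xi]$ for all such $\xi$ forces $\gamma^{(2k-1-\ell)}(t_i^-)=\gamma^{(2k-1-\ell)}(t_i^+)$ for $\ell=1,\dots,k-1$, i.e. continuity of the derivatives of orders $k,\dots,2k-2$; combined with the $C^{k-1}$ regularity inherited from the embedding $H^k\hookrightarrow C^{k-1}$, this shows that $\gamma|_{[0,t_j]}$ and $\gamma|_{[t_j,1]}$ are each of class $C^{2k-2}$. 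At a free endpoint $t_0=0$, when $j\ne 0$, the same free choice of $\xi^{(\ell)}(0)$ for $\ell=1,\dots,k-1$ forces the one-sided conditions $\gamma^{(2k-1-\ell)}(0)=0$, equivalently $\gamma^{(k+\ell-1)}(0)=0$ for $\ell=1,\dots,k-1$, and symmetrically at $t_N=1$.

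I expect the main obstacle to be the careful bookkeeping in the two integration-by-parts computations: on one side, verifying that the moment conditions produced by $V_0$ match exactly the hypotheses of Lemma~\ref{lem:DuBois-k} with index $k-1$ and that the lift of a moment-free $\eta$ genuinely lands in $V_0$; on the other side, tracking which boundary terms survive after $k$ integrations by parts and isolating, node by node, the derivatives whose jump is constrained by the available test directions. Unlike the Riemannian setting, no bootstrap iteration is required here, since Lemma~\ref{lem:DuBois-k} delivers the full polynomial structure of $\gamma^{(k)}$ in a single step.
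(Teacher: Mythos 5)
Your proposal is correct and follows essentially the same route as the paper: derive the vanishing of the moments $\int t^m\xi^{(k)}\,\mathrm{d}t$ for $m=0,\dots,k-1$ from variations supported in $(t_{i-1},t_i)$, apply Lemma~\ref{lem:DuBois-k} to conclude that $\gamma^{(k)}$ is a polynomial of degree at most $k-1$ on each subinterval, and then integrate by parts against general variations to extract the junction conditions at the nodes $t_i$, $i\ne j$, and the natural boundary conditions at the free endpoints. The only differences are cosmetic (you make the converse lifting of a moment-free $\eta$ to an admissible $\xi$ explicit, and you carry the harmless factor $2$ in the first variation).
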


\begin{proof}
	By standard computations, 
	it can be shown that
	for every $\gamma \in \Gamma_k$,
	the set of admissible variations
	is given by
	\[
		V_k = \left\{
			\xi \in H^k([0,1],\mathbb{R}):
			\xi(t_i) = 0,\, \forall i = 0,\dots,N,
			\,
			\xi^{(\ell)}(t_j) = 0,
			\, \forall \ell = 1,\dots,k-1
		\right\}.
	\]
	Let us fix $i = 1,\dots,N$
	and a variation $\xi \in V$
	such that $\supp \xi \subset (t_{i-1},t_i)$.
	Therefore, $\xi^{(\ell)}(t_{i-1}) = \xi^{(\ell)}(t_i) = 0$
	for every $\ell = 0,\dots,k-1$,
	and this implies that
	\begin{multline*}
		\int_{t_{i-1}}^{t_i}
		t^\ell \xi^{(k)}(t) \mathrm{d}t
		= (-1)^\ell\,\ell!
		\int_{t_{i-1}}^{t_i}
		\xi^{(k-\ell)}(t) \mathrm{d}t\\
		= (-1)^\ell\,\ell!
		\big(
			\xi^{(k-\ell-1)}(t_i)
			- \xi^{(k-\ell-1)}(t_{i-1})
		\big)
		= 0,
		\qquad \forall \ell = 1,\dots,k-1.
	\end{multline*}
	By a standard computation, we have
	\[
		\mathrm{d}f_k(\gamma)[\xi]
		= \int_{t_{i-1}}^{t_i}
		\gamma^{(k)}(t) \xi^{(k)}(t) \mathrm{d}t
		= 0,
	\]
	and therefore, by Lemma~\ref{lem:DuBois-k},
	we have that 
	\[
		\gamma^{(k)}(t) = c_{k-1}^i t^{k-1} + \dots + c_1^i t + c_0^i,
		\quad \text{a.e. in } (t_{i-1},t_i),
	\]
	for some real constants $c_{k-1}^i, \dots, c_0^i$.
	Hence, $\gamma|_{[t_{i-1},t_i]}$
	is smooth and~\eqref{eq:Euler-Lagrange-k} holds,
	for every $i = 1,\dots,N$.

	Now, let $\xi$ be a general variation.
	A careful application of the integration by parts formula
	leads to the following equality:
	\begin{equation}
		\label{eq:firstVar-k-smooth}
		\mathrm{d}f_k(\gamma)[\xi]
		= \sum_{i = 1}^{N} \left( \sum_{\ell = 1}^{k-1}
			(-1)^{\ell}
			\Big(\gamma^{(k+\ell-1)}(t_i^-) \xi^{(k-\ell)}(t_i)-
				\gamma^{(k+\ell-1)}(t_{i-1}^+) \xi^{(k-\ell)}(t_{i-1})
		\Big) \right) = 0.
	\end{equation}
	For every $i \ne j$, $i \ne 0,N$,
	and for every $\ell = 1,\dots,k-1$,
	there exists $\xi \in V_k$ such that
	$\xi^{(k-\ell)}(t_i) \ne 0$
	only for these $i$ and $\ell$.
	In this case we have
	\[
		\mathrm{d}f_k(\gamma)[\xi]
		= (-1)^{\ell}
		\Big(\gamma^{(k+\ell-1)}(t_i^-) - 
			\gamma^{(k+\ell-1)}(t_{i}^+)
		\Big) \xi^{(k-\ell)}(t_{i}) = 0,
	\]
	meaning that 
	\[
		\gamma^{(k+\ell-1)}(t_i^-) = \gamma^{(k+\ell-1)}(t_{i}^+),
		\qquad \forall \ell = 1,\dots,k-1,
		\, \forall i \ne j, i \ne 0,N,
	\]
	from which we infer that both
	$\gamma|_{[0,t_j]}$ and $\gamma|_{[t_j,1]}$
	are of class $C^{2k-2}$.
	If $t_j \ne 0$, by choosing a variation $\xi \in V_k$
	such that $\supp \xi \subset [0,t_1)$
	we obtain 
	\[
		\mathrm{d}f_k(\gamma)[\xi]
		= \sum_{\ell = 1}^{k-1}
		\gamma^{(k+\ell-1)}(0)
		\xi^{(k-\ell)}(0)
		= 0,
	\]
	from which we infer
	that $\gamma^{(k+\ell-1)}(0) = 0$
	for every $\ell = 1,\dots,k-1$,
	and an analogous result can be obtained 
	if $t_j \ne 1$.
\end{proof}

\subsection{Regularity of $k$--splines in the Riemannian setting}

This section provides a proof of Theorem~\ref{theorem:regularity-k-Riemann}.
As for the above regularity
result in the one dimensional setting,
we need a generalization of the DuBois-Reymond Lemma,
which in this case is the following.
\begin{lemma}
	\label{lem:DuBois-k-Riemannian}
	Let $k$ be a non-negative integer.
	If $\mu\colon [a,b] \to TM$ is a vector
	field of class $L^1$ along an absolute continuous curve
	$\gamma\colon [a,b] \to M$ such that 
	\[
		\int_a^b g\big(\mu(t),\eta(t)\big)\mathrm{d}t = 0,
	\]
	for any $\eta \in C^{\infty}_c((a,b),\mathbb{R})$
	that satisfies
	\[
		\covI_a^b t^\ell \eta(t)\mathrm{d}t = 0,
		\qquad \forall \ell = 0,\dots,k,
	\]
	then there exist $k+1$ constants
	parallel vector fields along $\gamma$,
	denoted by $\zeta_{0},\dots,\zeta_k$, such that
	\begin{equation}
		\label{eq:thesis-DBR-k-riemannian}
		\mu(t) = t^k\zeta_k(t) + \dots + t \zeta_1(t)+ \zeta_0(t)
		= \sum_{\ell = 0}^k t^\ell \zeta_\ell(t),
		\qquad \text{a.e. in }(a,b).
	\end{equation}
\end{lemma}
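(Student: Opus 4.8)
The plan is to prove Lemma~\ref{lem:DuBois-k-Riemannian} by reducing it to its one-dimensional counterpart, Lemma~\ref{lem:DuBois-k}, via parallel transport. The key idea is that the covariant integral condition $\covI_a^b t^\ell \eta(t)\,\mathrm{d}t = 0$ couples all the components of the vector field together through the curvature of $\gamma$, so working directly with a basis is delicate. Instead, I would test $\mu$ against vector fields $\eta$ built from a \emph{parallel} frame.

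First I would fix a parallel orthonormal frame $\{e_1(t),\dots,e_n(t)\}$ along $\gamma$, obtained by parallel-transporting an orthonormal basis of $T_{\gamma(a)}M$. Writing $\mu(t) = \sum_{m=1}^n \mu^m(t)\, e_m(t)$, each component $\mu^m \in L^1([a,b],\mathbb{R})$. Now consider scalar test functions $\psi \in C^\infty_c((a,b),\mathbb{R})$ and set $\eta(t) = \psi(t)\, e_m(t)$ for a fixed index $m$. Since $e_m$ is parallel, $\covD_t \eta = \dot\psi\, e_m$, and by the definition of the covariant integral one checks that $\covI_a^b t^\ell \eta(t)\,\mathrm{d}t = \big(\int_a^b t^\ell \psi(t)\,\mathrm{d}t\big)\, e_m(b)$. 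Hence the hypothesis $\covI_a^b t^\ell \eta\,\mathrm{d}t = 0$ for $\ell = 0,\dots,k$ is equivalent to the \emph{scalar} moment conditions $\int_a^b t^\ell \psi(t)\,\mathrm{d}t = 0$ for $\ell = 0,\dots,k$. For such $\psi$, the hypothesis gives $\int_a^b g(\mu,\eta)\,\mathrm{d}t = \int_a^b \mu^m(t)\,\psi(t)\,\mathrm{d}t = 0$, using orthonormality of the frame.

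Applying Lemma~\ref{lem:DuBois-k} to each component $\mu^m$ then yields constants $c_0^m,\dots,c_k^m$ with $\mu^m(t) = \sum_{\ell=0}^k c_\ell^m\, t^\ell$ a.e. Collecting these, I define $\zeta_\ell(t) \coloneqq \sum_{m=1}^n c_\ell^m\, e_m(t)$, which is a parallel vector field along $\gamma$ since it is a constant linear combination of the parallel frame. Substituting back gives
\[
	\mu(t) = \sum_{m=1}^n \mu^m(t)\, e_m(t)
	= \sum_{m=1}^n \sum_{\ell=0}^k c_\ell^m\, t^\ell\, e_m(t)
	= \sum_{\ell=0}^k t^\ell \zeta_\ell(t),
	\qquad \text{a.e. in } (a,b),
\]
which is exactly~\eqref{eq:thesis-DBR-k-riemannian}.

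The main obstacle I anticipate is the careful verification that the covariant-integral moment condition on $\eta = \psi\, e_m$ decouples cleanly into the scalar condition on $\psi$; this hinges precisely on choosing a \emph{parallel} frame so that $\covD_t$ acts only on the scalar factor, and on the fact that $\covI$ commutes with parallel transport in the sense that covariantly integrating $t^\ell \psi(t)\, e_m(t)$ reproduces $e_m(b)$ times the ordinary integral. One should also confirm that the test fields $\eta = \psi\, e_m$, though built from a single frame vector, are rich enough: since $m$ ranges over all of $\{1,\dots,n\}$, every component of $\mu$ is probed independently, so no information is lost. Note this argument uses only smooth scalar $\psi$ (matching the $C^\infty_c$ hypothesis), and the completeness and smoothness of $(M,g)$ guarantee the parallel frame exists and is smooth along $\gamma$.
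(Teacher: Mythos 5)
Your proof is correct, but it takes a genuinely different route from the paper's. The paper proves the lemma directly by induction on $k$ inside the covariant-integral formalism, mirroring its own proof of the scalar Lemma~\ref{lem:DuBois-k}: assuming the case $k$, it corrects an arbitrary admissible test field $\xi$ by adding $\phi^{(k+1)}(t)\,c_\xi(t)$ for a suitably chosen parallel field $c_\xi$, extracts the top-order parallel field $\zeta_{k+1}$ through an explicit duality computation with covariant integrals, and then invokes the inductive hypothesis; the base case $k=0$ is quoted as ``the DuBois-Reymond Lemma in the Riemannian setting.'' You instead reduce the whole statement to the scalar Lemma~\ref{lem:DuBois-k} in one step by expanding $\mu$ in a parallel orthonormal frame and testing against $\eta=\psi\,e_m$; the identity $\covI_a^b t^\ell\psi\,e_m\,\mathrm{d}t=\bigl(\int_a^b t^\ell\psi(t)\,\mathrm{d}t\bigr)\,e_m(b)$ is exactly what decouples the moment conditions componentwise, and your test fields form a subclass of those allowed by the hypothesis, which is all that is needed since you only draw conclusions from them. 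Your route avoids repeating the induction and does not require a separate Riemannian base case, at the price of choosing a frame, whereas the paper's argument is frame-free. Two minor points to tighten: since $\gamma$ is only absolutely continuous, the parallel frame is absolutely continuous rather than smooth along $\gamma$ (this is harmless --- the components $\mu^m=g(\mu,e_m)$ remain $L^1$ and the covariant-integral identity above still holds); and the hypothesis class written as $C^{\infty}_c((a,b),\mathbb{R})$ in the statement should be read as compactly supported smooth vector fields along $\gamma$, into which your fields $\psi\,e_m$ fall.
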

\begin{proof}
	The proof follows the same lines
	of the one of Lemma~\ref{lem:DuBois-k},
	namely it uses an induction argument and
	a direct construction of the first term,
	namely of the parallel vector field
	that appears with the higher order in
	~\eqref{eq:thesis-DBR-k-riemannian}.

	If $k = 0$, then the thesis is the
	DuBois-Reymond Lemma in the Riemannian setting.
	Thus, let us assume that the thesis holds
	for $k$ and let us prove it for $k+1$.

	Let $\xi\colon [a,b] \to TM$
	be a smooth vector field along $\gamma$
	with compact support in $(a,b)$
	such that 
	\[
		\covI_{a}^b t^\ell\xi(t)\mathrm{d}t = 0,
		\qquad\forall \ell = 0,\dots,k.
	\]
	Let us fix a function $\phi\in C^{\infty}_c((a,b),\mathbb{R})$
	such that $\int_{a}^b \phi(t)\mathrm{d}t = 1$.
	Let $c_{\xi}\colon [a,b] \to TM$	
	be the parallel vector field along 
	$\gamma$ such that
	\[
		c_{\xi}(b) = \frac{(-1)^k}{(k+1)!}
		\covI_{a}^b t^{k+1}\xi(t)\mathrm{d}t
		\in T_{\gamma(b)}M.
	\]
	With this notation, let us define
	the vector field $\eta\colon [a,b] \to TM$
	as follows:
	\[
		\eta(t)\coloneqq \xi(t) + \phi^{(k+1)}(t) c_\xi(t).
	\]
	For any $\ell = 0,\dots,k$, we have
	\[
		\covI_{a}^b t^\ell \eta(t)\mathrm{d}t
		= 
		\covI_{a}^b t^\ell \xi(t)\mathrm{d}t
		+ 
		\covI_{a}^b \phi^{(k+1)}(t) t^\ell c_\xi(t)\mathrm{d}t
		= 
		(-1)^{\ell}\,\ell!
		\covI_{a}^b \phi^{(k+1-\ell)}(t)c_\xi(t)
		= 0,
	\]
	where, since $c_\xi$ is a parallel vector field,
	we have
	$\covD_t\big(t^\ell c_\xi(t)\big)
	= \ell\, t^{\ell-1} c_\xi(t)$.
	Moreover, we have
	\begin{multline*}
		\covI_{a}^b	t^{k+1} \eta(t)\mathrm{d}t
		= 
		\covI_a^b t^{k+1}\xi(t)\mathrm{d}t
		+ (-1)^{k+1} (k+1)!
		\covI_a^b \phi(t)c_{\xi}(t)\mathrm{d}t
		\\
		= 
		\covI_a^b t^{k+1}\xi(t)\mathrm{d}t
		+
		(-1)^{k+1}(k+1)!
		c_{\xi}(b)
		= 
		\covI_a^b t^{k+1}\xi(t)\mathrm{d}t
		- \covI_a^b t^{k+1}\xi(t)\mathrm{d}t
		= 0.
	\end{multline*}
	Therefore, by hypothesis, we have
	\[
		\int_{a}^b g(\mu(t),\eta(t))\mathrm{d}t
		= 0,
	\]
	hence
	\begin{equation}
		\label{eq:reg-k-Rieman-proof1}
		\int_{a}^b g\big(\mu(t),\xi(t) + \phi^{(k+1)}(t)c_\xi(t)\big)
		= 0.
	\end{equation}
	Let $\zeta_{k+1}\colon [a,b] \to TM$
	be the parallel vector field such that
	\[
		\zeta_{k+1}(b) = -\frac{(-1)^k}{(k+1)!}
		\covI_{a}^b \phi^{(k+1)}(t)\mu(t)\mathrm{d}t,
	\]
	and let us notice that
	\begin{multline*}
		\int_{a}^b g\big(\mu(t),\phi^{k+1}(t)c_\xi(t)\big)
		\mathrm{d}t
		=
		\int_{a}^b g\big(\phi^{k+1}(t)\mu(t),c_\xi(t)\big)
		\mathrm{d}t
		 \\ =
		g\Big(\covI_{a}^b \phi^{(k+1)}(t)\mu(t)\mathrm{d}t,c_\xi(b)\Big)
		= 
		g\Big(\covI_{a}^b \phi^{(k+1)}(t)\mu(t)\mathrm{d}t,\frac{(-1)^k}{(k+1)!}
		\covI_{a}^b t^{k+1}\xi(t)\mathrm{d}t\Big)\\
		= -g\Big(\zeta_{k+1}(b),\covI_{a}^b t^{k+1}\xi(t)\Big)
		=- \int_a^b g\big(\zeta_{k+1}(t),t^{k+1}\xi(t)\big)
		\mathrm{d}t
		=- \int_a^b g\big(t^{k+1}\zeta_{k+1}(t),\xi(t)\big)
		\mathrm{d}t.
	\end{multline*}
	Therefore, from~\eqref{eq:reg-k-Rieman-proof1}
	we obtain the following identity:
	\[
		\int_{a}^b g\big(\mu(t) - t^{k+1}\zeta_{k+1}(t),\xi(t)\big)
		= 0.
	\]
	By the arbitrariness of $\xi$ and the inductive hypothesis,
	there exist $k+1$ constant vector fields along $\gamma$,
	say $\zeta_0,\dots,\zeta_{k}$ such that
	\[
		\mu(t) = t^{k+1}\zeta_{k+1}(t)
		+ \dots + t\zeta_1(t) + \zeta_0(t),
		\qquad \text{a.e. in }[a,b],
	\]
	and we are done.
\end{proof}

\begin{proof}[Proof of Theorem~\ref{theorem:regularity-k-Riemann}]
	In the setting provided by Theorem~\ref{theorem:regularity-k-Riemann},
	for every $\gamma \in \Gamma_k$
	the set of admissible variations is
	\begin{equation}
		\label{eq:def-V-Riemann-k}
		V_{k,\gamma} \coloneqq \left\{
			\begin{aligned}
				\xi \in H^k([0,1],TM) :
				\xi(t) \in T_{\gamma(t)}M, \, \forall t \in [0,1],
		& \quad \xi(t_i) = 0,\, \forall i = 0,\dots, N,\\
		& \quad \covD_t^{\ell}\xi(t_j) = 0,\, \forall \ell = 1,\dots,k-1
			\end{aligned}
		\right\},
	\end{equation}
	and the differential $\mathrm{d}f_k(\gamma)\colon V_{k,\gamma} \to \mathbb{R}$
	is given by the following formula
	(cf. \cite[Proposition 3.1]{GGP2004}):
	\begin{equation}
		\label{eq:firstVariation-Riemannian-k}
		\mathrm{d}f_k(\gamma)[\xi]
		= 
		\int_{0}^1
		g\Big(\covD_t^{k-1}\dot{\gamma},
			\covD_t^{k} \xi
			+
			\sum_{\ell = 0}^{k-2}
			\covD_t^\ell
			\big(R(\xi,\dot{\gamma})
				\covD_t^{k - \ell -2}
				\dot{\gamma}
			\big)
		\Big)\mathrm{d}t.
	\end{equation}
	Due to the lengthy computations and 
	heavy notation involving
	the covariant derivatives of the curvature tensor $R$,
	we will first show the regularity result 
	for the case $k = 3$.
	This approach allows us to clearly present the key steps of the proof
	without being overwhelmed by excessive notation.
	We will then extend the results to higher values of $k$.

	Let $k = 3$; as usual,
	for a given $i = 1,\dots,N$,
	let us consider a variation $\xi \in V_{k,\gamma}$
	with compact support in $(t_{i-1},t_i)$.
	Then, by applying the symmetric and skew-symmetric 
	properties of the curvature tensor $R$ and of its
	covariant derivative, we have
	\begin{equation*}
		\begin{aligned}
			\mathrm{d}f_k(\gamma)[\xi]
		& = \int_{t_{i-1}}^{t_{i}}
		g\big(\covD_t^{2}\dot{\gamma},\covD_t^3\xi\big)
		\mathrm{d}t
		+
		\int_{t_{i-1}}^{t_{i}}
		g\big(\covD_t^{2}\dot{\gamma},
		R(\xi,\dot{\gamma})\covD_t \dot{\gamma} \big)
		\mathrm{d}t
		+
		\int_{t_{i-1}}^{t_{i}}
		g\big(\covD_t^{2}\dot{\gamma},
		\covD_t\big(R(\xi,\dot{\gamma})\covD_t \dot{\gamma}\big) \big)
		\mathrm{d}t\\
		&= 
		 \int_{t_{i-1}}^{t_{i}}
		g\big(\covD_t^{2}\dot{\gamma},\covD_t^3\xi\big)
		\mathrm{d}t
		+
		\int_{t_{i-1}}^{t_{i}}
		g\big(\covD_t^{2}\dot{\gamma},
		R(\xi,\dot{\gamma})\covD_t \dot{\gamma} \big)
		\mathrm{d}t\\
		&
		+
		\int_{t_{i-1}}^{t_i}
		g\Big(\covD_t^{2}\dot{\gamma},
			(\covD_t R)(\xi,\dot{\gamma})\dot{\gamma}
			+ R(\covD_t\xi,\dot{\gamma})\dot{\gamma}
			+ R(\xi,\covD_t\dot{\gamma})\dot{\gamma}
			+ R(\xi,\dot{\gamma})\covD_t\dot{\gamma}
		\Big)
		\mathrm{d}t\\
		&= \int_{t_{i-1}}^{t_{i}}
		g\big(\covD_t^{2}\dot{\gamma},\covD_t^3\xi\big)
		\mathrm{d}t
		- \int_{t_{i-1}}^{t_{i}}
		g\big(R(\covD_t\dot{\gamma},\covD_t^2\dot{\gamma})\dot{\gamma},
		\xi\big)
		\mathrm{d}t\\
		&- \int_{t_{i-1}}^{t_{i}}
		g\big((\covD_tR)(\dot{\gamma},
			\covD_t^2\dot{\gamma})\dot{\gamma}
			+ R(\dot{\gamma},\covD_t^2\dot{\gamma})\covD_t\dot{\gamma}
			+ R(\covD_t\dot{\gamma},\covD_t^2\dot{\gamma})\dot{\gamma},
		\xi\big)
		\mathrm{d}t\\
		&- \int_{t_{i-1}}^{t_{i}}
		g\big(
			R(\dot{\gamma},\covD_t^2\dot{\gamma})\dot{\gamma},
			\covD_t\xi
		\big)
		\mathrm{d}t.
		\end{aligned}	
	\end{equation*}
	For the sake of simplicity, we can rewrite
	the previous equation as follows:
	\[
		\mathrm{d}f_k(\gamma)[\xi]
		= 
		\int_{t_{i-1}}^{t_i}
		g\big(\covD_t^2\dot{\gamma},\covD_t^3\xi\big)\mathrm{d}t
		-
		\int_{t_{i-1}}^{t_i}
		g\big(h_3(\dot{\gamma},\covD_t\dot{\gamma},\covD_t^2\dot{\gamma}),
		\xi\big)\mathrm{d}t
		-
		\int_{t_{i-1}}^{t_i}
		g\big(h_2(\dot{\gamma},\covD_t^2\dot{\gamma}),
		\covD_t\xi\big)\mathrm{d}t,
	\]
	where both $h_3$ and $h_2$
	are vector fields along $\gamma$ of
	class $L^2$.
	As a consequence, setting
	\[
		\eta_3^i(t) =
		-
		\covI_{t_{i-1}}^t
		\left( \covI_{t_{i-1}}^\tau
			\Big(
				\covI_{t_{i-1}}^s
				h_3(\dot{\gamma},\covD_\sigma\dot{\gamma},
				\covD_\sigma^2\dot{\gamma})
				\mathrm{d}\sigma
			\Big)
		\mathrm{d}s \right)\mathrm{d}\tau,
		\qquad \forall t \in [t_{i-1},t_i],
	\]
	and
	\[
		\eta_2^i(t) =
		\covI_{t_{i-1}}^t
		\left( \covI_{t_{i-1}}^\tau
			w(\dot{\gamma},\covD_s^2\dot{\gamma})
		\mathrm{d}s \right)\mathrm{d}\tau,
		\qquad \forall t \in [t_{i-1},t_i],
	\]
	by integrating by parts we obtain
	\[
		\mathrm{d}f(\gamma)[\xi]
		= \int_{t_{i-1}}^{t_i}
		g\big(\covD_t^2\dot{\gamma}(t) - \eta_3^i(t) - \eta_2^i(t),
		\covD_t^3\xi(t)\big)
		\mathrm{d}t
		= 0.
	\]
	As a consequence, by Lemma~\ref{lem:DuBois-k-Riemannian}
	we obtain the existence of three parallel vector fields
	along $\gamma|_{[t_{i-1},t_i]}$,
	say $\zeta_0^i,\zeta_1^i$, and $\zeta_2^i$,
	such that
	\[
		\covD_t^2\dot{\gamma}(t)
		= \eta_3^i(t) + \eta_2^i(t)
		+ t^2 \zeta_2^i(t) + t\zeta_1^i(t) + \zeta_0^i(t),
		\qquad \forall t \in [t_{i-1},t_i].
	\]
	At this step, the bootstrap iterative procedure can start:
	since $\eta_2^i$ is of class $H^2$
	and $\eta_3^i$ is of class $H^3$,
	we obtain that $\covD_t^2\gamma$ is of class $H^2$,
	so $\gamma$ is of class $H^4$ in $[t_{i-1},t_i]$;
	but then, $\eta_2^i$ is of class $H^3$,
	and so $\gamma^i$ is of class $H^5$,
	and so forth.
	By this iteration, we have that $\gamma$
	is actually smooth in $[t_{i-1},t_i]$.

	For higher-order derivatives, a similar procedure will lead
	to the desired regularity results:
	let $k \ge 2$;
	for every $\ell = 0, \dots, k-2$,
	we denote by $\Omega_\ell \subset \mathbb{N}^4$
	the set of (ordered) quadruples of non-negative
	integers whose sum is equal to $\ell$.
	Hence, $\omega \in \Omega_\ell$
	if $\omega = (\omega^1, \omega^2, \omega^3, \omega^4)
	\in \mathbb{N}^4$
	and $\omega^1 + \omega^2 + \omega^3 + \omega^4 = \ell$.
	With this notation, for any $\xi \in V_{k,\gamma}$
	with compact support in $(t_{i-1},t_i)$,
	from~\eqref{eq:firstVariation-Riemannian-k} we 
	infer the following equation
	\begin{equation*}
		\begin{aligned}
			\mathrm{d}f_k(\gamma)[\xi]
			&
			= 
		\int_{t_{i-1}}^{t_i}
		g\Big(\covD_t^{k-1}\dot{\gamma},
			\covD_t^{k} \xi
			+
			\sum_{\ell = 0}^{k-2}
			\covD_t^\ell
			\big(R(\xi,\dot{\gamma})
				\covD_t^{k - \ell -2}
				\dot{\gamma}
			\big)
		\Big)\mathrm{d}t
		\\
			& = 
		\int_{t_{i-1}}^{t_i}
		g\big(\covD_t^{k-1}\dot{\gamma},\covD_t^k\xi\big)
		\mathrm{d}t
		+
		\int_{t_{i-1}}^{t_i}
		g\Big(\covD_t^{k-1}\dot{\gamma},
			\sum_{\ell = 0}^{k-2}
			\sum_{\omega \in \Omega_\ell}
			\big(
				(\covD_t^{\omega^1}R)
				(\covD_t^{\omega^2}\xi,\covD_t^{\omega^3}\dot{\gamma})
				\covD_t^{k - \ell - 2 + \omega^4}\dot{\gamma}
		\big)
		\Big)\mathrm{d}t.
		\end{aligned}
	\end{equation*}
	By the linearity of the integral operator
	and the symmetric and skew-symmetric properties of the 
	curvature tensor $R$, we then obtain
	\begin{equation*}
		\begin{aligned}
			\mathrm{d}f_k(\gamma)[\xi]
			& = 
		\int_{t_{i-1}}^{t_i}
		g\big(\covD_t^{k-1}\dot{\gamma},\covD_t^k\xi\big)
		\mathrm{d}t
		+
			\sum_{\ell = 0}^{k-2}
			\sum_{\omega \in \Omega_\ell}
		\int_{t_{i-1}}^{t_i}
		g\Big(\covD_t^{k-1}\dot{\gamma},
				(\covD_t^{\omega^1}R)
				(\covD_t^{\omega^2}\xi,\covD_t^{\omega^3}\dot{\gamma})
				\covD_t^{k - \ell - 2 + \omega^4}\dot{\gamma}
		\Big)\mathrm{d}t\\
			& = 
		\int_{t_{i-1}}^{t_i}
		g\big(\covD_t^{k-1}\dot{\gamma},\covD_t^k\xi\big)
		\mathrm{d}t
		-
			\sum_{\ell = 0}^{k-2}
			\sum_{\omega \in \Omega_\ell}
		\int_{t_{i-1}}^{t_i}
		g\Big((\covD_t^{\omega^1}R)
			(\covD_t^{k - \ell - 2 + \omega^4}\dot{\gamma},
			\covD_t^{k-1}\dot{\gamma})
			\covD_t^{\omega^3}\dot{\gamma},
			\covD_t^{\omega^2}\xi
		\Big)\mathrm{d}t.
		\end{aligned}
	\end{equation*}
	As a consequence, by rearranging the terms 
	we obtain $k-1$ vector fields along $\gamma$,
	say $h_2,h_3,\dots, h_k$,
	each depending on $\dot{\gamma}$
	and its covariant derivatives up to order $k-1$ 
	and, consequently, they are of class $L^2$,
	such that
	\begin{equation*}
		\mathrm{d}f_k(\gamma)[\xi]
		= 
		\int_{t_{i-1}}^{t_i}
		g\big(\covD_t^{k-1}\dot{\gamma},\covD_t^k\xi\big)
		\mathrm{d}t
		+
		\sum_{\ell = 2}^k
		\int_{t_{i-1}}^{t_i}
		g\big(h_\ell, \covD_t^{k-\ell}\xi\big)
		\mathrm{d}t.
	\end{equation*}
	Now, for each $\ell = 2,\dots,k$
	we apply the integration by part formula $\ell$
	times, obtaining a vector field 
	$\eta^i_\ell$ of class $H^{\ell}$,
	that with opportune changes of signs lead 
	to the following equation
	\begin{equation*}
		\begin{aligned}
		\mathrm{d}f_k(\gamma)[\xi]
		&
		= 
		\int_{t_{i-1}}^{t_i}
		g\big(\covD_t^{k-1}\dot{\gamma},\covD_t^k\xi\big)
		\mathrm{d}t
		-
		\sum_{\ell = 2}^k
		\int_{t_{i-1}}^{t_i}
		g\big(\eta^i_\ell, \covD_t^{k}\xi\big)
		\mathrm{d}t
		\\
		&=
		\int_{t_{i-1}}^{t_i}
		g\Big(\covD_t^{k-1}\dot{\gamma}
		-\sum_{\ell = 2}^k \eta^i_\ell,\covD_t^k\xi\Big)
		 = 0.
		\end{aligned}
	\end{equation*}
	Now, the bootstrap iterative procedure can start:
	by Lemma~\ref{lem:DuBois-k-Riemannian}
	we obtain $k$ parallel vector fields along $\gamma$,
	say $\zeta^i_0, \dots, \zeta_{k-1}^i$,
	such that 
	\[
		\covD_t^{k-1}\dot{\gamma}(t)
		= \sum_{\ell = 2}^k
		\eta^i_{\ell}(t)
		+ \sum_{l = 0}^{k-1}
		t^{l} \zeta^i_l(t),
		\qquad \forall t \in [t_{i-1}, t_i].
	\]
	As a consequence,
	$\covD_t^{k-1}\dot{\gamma}$
	has the same regularity as the least regular
	vector field on the right-hand side
	of the previous equation,
	namely $\eta^i_2 \in H^2([t_{i-1}, t_i], TM)$.
	But then $\eta^i_2$ is of class $H^4$
	and so on.
	Therefore, by this iterative procedure we conclude 
	that $\gamma|_{[t_{i-1}, t_i]}$ is a smooth function,
	for every $i = 1, \dots, N$.

	From this regularity,
	one can integrate by parts the first variation formula
	~\eqref{eq:firstVariation-Riemannian-k}
	and then apply the fundamental lemma of the calculus of variations
	to obtain~\eqref{eq:Riemannian-k-spline}.

	Finally, the $C^{2k-2}$ regularity of 
	$\gamma|_{[0,t_j]}$ and $\gamma|_{[t_j,1]}$
	and the conditions at the extrema 
	can be obtained
	from iterative integration by parts of
	~\eqref{eq:firstVariation-Riemannian-k},
	when applied to a general $\xi \in V_{k,\gamma}$
	and set to zero by the criticality condition of $\gamma$,
	and by~\eqref{eq:Riemannian-k-spline},
	using a similar procedure employed in the proofs
	of Theorem~\ref{theorem:regularity}
	and Proposition~\ref{prop:regularity-k}.
\end{proof}

\section{Existence of minimizers}
\label{sec:existence}

As stated in the introduction, the existence result 
we provide stems from a direct inspection 
of the counter-example provided as
a proof of~\cite[Lemma 2.15]{Heeren2019},
which states that a minimizer of $f$ can not exist
if only the interpolation points are prescribed.
For the reader's convenience, and because of its importance
in the subsequent discussion,
we report here the above counterexample.
\begin{example}
	\label{ex:cylinderExample}
	Let $M \subset \mathbb{R}^{3}$ be
	the following $2$--dimensional cylinder,
	endowed with the standard
	Euclidean metric induced by $\mathbb{R}^3$:
	\[
		M = \left\{
			(x,y,z)\in \mathbb{R}^{3}:
			x^2 + y^2 = \frac{1}{4\pi^2}
		\right\},
	\]
	so that the ``perimeter'' of $M$ has length equal to one.
	Let us set $(t_0,t_1,t_2) = (0,r,1)$,
	where $r \in (0,1)$ is an \emph{irrational} number,
	and let $p_0 = p_2 = \left(\frac{1}{2\pi},0,0\right)$,
	$p_1 = \left(-\frac{1}{2\pi},0,0\right)$.
	This setting is illustrated in Figure~\ref{fig:example}.
	Now, let us prove that 
	a minimizer of the spline energy functional
	among the smooth curves passing through that points 
	at these times doesn't exist.
	This can be done by showing the existence of a 
	minimizing sequence, i.e., 
	a sequence whose functional converges to its infimum,
	but the infimum cannot be achieved.
	\begin{figure}[ht]
		\centering
		\begin{minipage}{0.5\textwidth}
			\centering
			\begin{picture}(210,260)
				\put(0,0){\includegraphics[width=0.93\textwidth]{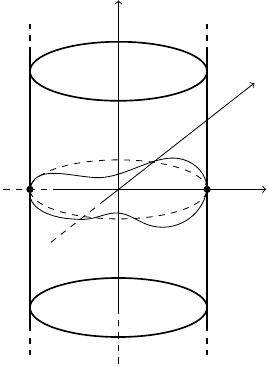}}
				\put(200,125){$x$}
				\put(193,203){$y$}
				\put(95,260){$z$}
				\put(160,140){$p_0 = p_2$}
				\put(7,140){$p_1$}
				\put(120,95){$\gamma$}
				\put(50,155){$\tilde\gamma$}
				\put(5,190){$M$}
			\end{picture}
		\end{minipage}%
		\begin{minipage}{0.5\textwidth}
			\centering
			\begin{picture}(210,260)
				\put(30,0){\includegraphics[width=0.78\textwidth]{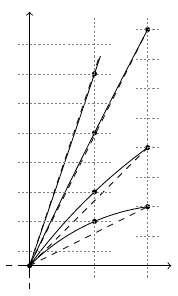}}
				\put(188,15){$t$}
				\put(122,17){$r$}
				\put(173,88){$(1,2)$}
				\put(173,146){$(1,4)$}
				\put(173,237){$(1,k)$}
				\put(26,65){$3/2$}
				\put(26,95){$5/2$}
				\put(12,204){$m + \frac{1}{2}$}
			\end{picture}
		\end{minipage}
		\caption{The setting of
			Example~\ref{ex:cylinderExample} (left)
			and its representation in the plane (right):
			On the plane, it can be observed how the
			straight lines (dashed in the figure)
			joining $(0,0)$ with points $(0,\mathbb{Z})$ can pass
			arbitrarily close to points
			in $(r, \frac{1}{2} + \mathbb{Z})$,
			thus requiring small perturbations
			to pass through those points,
			which implies a small increase
			in the spline functional.
		}
		\label{fig:example}
	\end{figure}

	First of all, let us notice that for any curve on $M$,
	its projection on the plane $z \equiv 0$
	has lower spline energy.
	More formally, for any curve
	$\gamma\colon [0,1] \to M$,
	$\gamma(t) = (\gamma_1(t),\gamma_2(t),\gamma_3(t))$,
	setting 
	$\tilde\gamma(t) = (\gamma_1(t),\gamma_2(t),0)$,
	we have 
	$f(\tilde\gamma) \le f(\gamma)$.
	So, in our interpolation problem,
	we can restrict our analysis on the 
	curves that lie on the $z \equiv 0$ plane
	(see Figure~\ref{fig:example}).

	Secondly, using the standard identification
	of $S^1$ with $\mathbb{R}/\mathbb{Z}$,
	the problem can be cast in the following setting:
	\[
		\inf \left\{
			\int_0^1 \ddot{\gamma}^2(t)\mathrm{d}t:
			\gamma\in H^2([0,1],\mathbb{R}),
			\gamma(0) = 0,
			\gamma(r) \in \frac{1}{2} + \mathbb{Z},
			\gamma(1) \in \mathbb{Z}
		\right\},
	\]
	as represented in Figure~\ref{fig:example}.
	By the irrationality of $r$, any straight line joining the 
	point $(0,0)$ with a point in  $(0,\mathbb{Z})$
	doesn't pass through $(r,\frac{1}{2} + \mathbb{Z})$.
	Hence, the spline energy is strictly greater than zero
	for any curve that satisfies the prescribed 
	interpolation conditions,
	meaning that the infimum of the above problem 
	is greater then or equal to $0$,
	and that the minimum, if it exists, can't be zero.
	However, if we consider the sequence of parabolas
	$(\gamma_{k})_{k \in \mathbb{N}}\colon [0,1] \to \mathbb{R}$
	such that $\gamma_k(0) = 0$, $\gamma_k(1) = k$
	and $\gamma_k(r) = m + 1/2$,
	where $m \in \mathbb{Z}$ will be determined later,
	a direct computation shows that
	\[
		\int_{0}^{1}\ddot{\gamma}_k^2(t)\mathrm{d}t
		=  4\frac{\left(m + \frac{1}{2} - kr\right)^2}{(r^2 - r)^2}.
	\]
	By Dirichlet's approximating theorem,
	there exists a subsequence 
	$(\gamma_{k_j})_{j \in \mathbb{N}}$
	and a sequence $m_j \in \mathbb{N}$
	such that
	\[
		\lim_{j \to \infty} \Big(m_j +\frac{1}{2} - k_{j}r\Big) = 0,
	\]
	see Figure~\ref{fig:example}.
	Hence, we have that $\lim_{j \to \infty}f(\gamma_{k_j}) = 0$,
	so that $(\gamma_{k_j})_{j \in \mathbb{N}}$ is a minimizing sequence
	and the infimum is indeed $0$, which can't be a minimum.
	Therefore, a minimizer for the spline energy functional 
	with the above interpolation constraints does not exist.
\end{example}
\begin{remark}
	\label{rem:diverging-velocities}
	As one can infer by a direct inspection
	of Example~\ref{ex:cylinderExample},
	the construction of the minimizing sequence is obtained 
	by a small perturbation of straight lines that are 
	approximating the interpolation conditions,
	and such a sequence of maps can exist only if
	no velocity is prescribed,
	since the initial velocity of such a sequence of functions
	has to diverge.
	Indeed, we have
	\[
		\lim_{j \to \infty}|\dot{\gamma}_{k_j}| = 
		\lim_{j \to \infty}\left(
			k_j - \frac{2m_j + 1 - 2k_jr}{r^2 - r}
		\right)
		= +\infty.
	\]
\end{remark}
Remark~\ref{rem:diverging-velocities} suggests that,
if at least one of the velocities of the curves is prescribed,
for example the initial one, then 
in Example~\ref{ex:cylinderExample}
the set of points $(r,\mathbb{Z} + 1/2)$
that can be achieved without increasing
``too much'' the spline energy functional is finite.
This is the heuristically observation that suggested
the existence result
formally given by Theorem~\ref{theorem:existence}.

\begin{remark}
	One can be easily convinced that if
	natural or periodic boundary conditions are prescribed, 
	then a minimizer of $f$ can not exist when $\sigma = 0$.
	Considering once again the problem
	given in Example~\ref{ex:cylinderExample},
	this can be understood by observing
	that suitable small perturbations of the straight 
	lines passing through $(0,0)$ and $(1,k)$
	lead to a sequence of curves $(\gamma_{k})_{k \in \mathbb{N}}$
	satisfying the interpolation conditions and
	both the natural and periodic boundary conditions,
	all while keeping the increase
	in the spline energy functional relatively low.
	Consequently, we still have
	$\lim_{k \to \infty} f(\gamma_k) = 0$,
	indicating the absence of a minimizer.

	For the sake of precision,
	let us explicitly construct one of these sequences.
	Let $(\omega_{k_j})_{j \in \mathbb{N}}$ be the 
	sequence of straight lines passing through
	$(0,0)$ and $(1,k_j)$;
	as previously done, let us also consider a sequence 
	$(m_j)_{j \in \mathbb{N}}\subset N$ such that
	\[
		\alpha_j \coloneqq m_j + \frac{1}{2} -k_j r \to 0,
		\qquad \text{as } j \to \infty.
	\]
	As a consequence, 
	the distance between the point $(r,m_{j} + \frac{1}{2})$
	and the line $\omega_{k_j}$ is converging to zero.
	Let us choose $\delta > 0$ such that 
	$[r-\delta,r+\delta] \subset (0,1)$
	and a function $\phi \in C^{2}([0,1],\mathbb{R})$
	such that $\supp \phi \subset (r-\delta,r+\delta)$,
	and $\phi(r) = 1$.
	Then, every function $\gamma_{j}\in C^2([0,1],\mathbb{R})$
	defined by $\gamma_j = \omega_{k_j} + \alpha_j\phi$
	satisfies both the natural and the periodic
	boundary conditions, since
	$\ddot{\gamma}_j(0) = \ddot{\gamma}_j(1) = 0$
	and 
	$\dot{\gamma}_j(0) = \dot{\gamma}_j(1) = k_j$,
	respectively, and it satisfies the interpolation 
	condition at $r$ by construction.
	Moreover,
	\[
		\lim_{j \to \infty}f(\gamma_j)
		= \lim_{j \to \infty}
		\alpha_j^2
		\int_{0}^{1} \ddot{\phi}^2(t)\mathrm{d}t
		= 0,
	\]
	hence $(\gamma_j)_{j \in \mathbb{N}}$ is a minimizing sequence,
	but the problem doesn't admit a minimum.
\end{remark}

\begin{remark}
	Before giving the proof of
	Theorem~\ref{theorem:existence},
	let us make an important observation 
	about the number of constraints 
	and the number of parameters we have.
	We give it having in mind the one dimensional setting,
	but the same arguments hold for
	the more general Riemannian setting.
	From the regularity result
	(i.e.,Theorem~\ref{theorem:regularity}),
	we know that a minimizer is 
	a cubic spline that, in each interval $[t_{i-1},t_{i}]$
	can be uniquely determined by $4$ parameters.
	Therefore, each cubic spline is
	characterized by $4N$ parameters. 

	On the other hand, the constraints
	we impose on the problem give a total 
	of $4N$ constraints on the parameters 
	of the cubic splines,
	independently on where the velocity is imposed.
	Let us explicitly count them in the case
	of prescribed initial velocity.
	Since the curve has to pass through 
	the points $p_0,\ldots,p_N \in M$,
	each cubic polynomial is constraints on the extreme points,
	and this provides $2N$ constraints.
	Moreover, the first one has prescribed 
	velocity, while the last one,
	because of the natural boundary condition
	$\ddot{\gamma}(1) = 0$,
	has another constraint,
	so that we have $2N + 2$ constrained parameters.
	Moreover, by Theorem~\ref{theorem:regularity},
	the minimizer is a function of class $C^2$,
	hence on each of the points $p_1,\ldots,p_{N-1}$
	we must impose other $2$ constraints,
	i.e., $\dot{\gamma}(t_i^-) = \dot{\gamma}(t_i^+)$
	and  $\ddot{\gamma}(t_i^-) = \ddot{\gamma}(t_i^+)$,
	and this regularity condition provides
	$2(N-1)$ constraints.
	Summing up, the total number of the constraints 
	is $2N + 2 + 2(N-1) = 4N$,
	so the minimizer is uniquely determined.

	If the velocity is prescribed at one 
	interior time $t_j \in (0,1)$,
	then two natural boundary 
	conditions have to be taken into account,
	so that the number of constraints at the extrema
	doesn't change;
	because both $\dot{\gamma}(t_j^-)$
	and $\dot{\gamma}(t_j^+)$ must be equal 
	to $v$,
	we have an additional constraint,
	but at the same time we lose one constraint 
	on the regularity, since in $t_j$
	the minimizer has only the $C^1$--regularity.
	Therefore, even in this case, 
	we have a total of $4N$ constraints,
	and the minimizer is unique.
\end{remark}

\subsection{Proof of Theorem~\ref{theorem:existence}}
The proof of Theorem~\ref{theorem:existence}
follows the direct method of the calculus of variations.
Specifically, we will prove the coercivity of the functional
$f\colon \Gamma \to \mathbb{R}$,
which ensures the weak convergence of a minimizing sequence
(more formally, of one of its subsequences).
Additionally, we will demonstrate that the functional is
weakly lower semicontinuous,
meaning that if $\gamma_k$ weakly converges 
to $\tilde\gamma$,
then $\liminf_{k \to \infty} f(\gamma_k) \ge f(\tilde{\gamma})$,
so that $\tilde{\gamma}$ is a minimizer.
These results have already been obtained
in~\cite[Section 3]{GGP2002} for the case
of initial and final prescribed points and velocities,
but a close inspection of the arguments shows
that the same procedure applies in our setting.
For the reader's convenience, and for a self-contained work,
we provide the proof here with the necessary modifications.

Let $\Gamma \subset H^2([0,1],M)$
be given as in Theorem~\ref{theorem:existence}.
We start with the following coercivity result,
whose proof relies on the boundary condition 
on the velocity at the initial time,
i.e.,
$\dot{\gamma}(0) = v$ for every $\gamma \in \Gamma$.
\begin{lemma}
	\label{lem:coercivity}
	Let $(\gamma_k)_{k \in N}\subset \Gamma$ be such that 
	$\sup_{k \in N}f(\gamma_k) < +\infty$.
	Then, $g(\dot{\gamma}_k,\dot{\gamma}_k)$ is uniformly bounded
	and $(\gamma_k)_{k \in N}$ admits a subsequence that 
	weakly converges, with respect to the $H^2$--norm,
	to a curve in $\Gamma$.
\end{lemma}

\begin{proof}
	The proof follows along the lines of~\cite[Lemma 3.1]{GGP2002}.
	Let $c^2 = \sup_{k \in \mathbb{N}} f(\gamma_k)$
	and define, for every $k$,
	$H_k\colon [0,1] \to \mathbb{R}$
	as $H_k(t) \coloneqq g(\dot{\gamma}_k(t), \dot{\gamma}_k(t))$.
	Since $\dot{\gamma}_k(0) = v$ for every $k$,
	setting $h_0 = g(v,v)$,
	we obtain the following estimate,
	which also uses the classical H\"older inequality:
	\begin{multline*}
		H_k(t) 
		= h_0 + \int_0^t \frac{\mathrm{d}}{\mathrm{d}\tau} H_k(\tau) \mathrm{d}\tau
		\le h_0 + 2 \int_0^1 \Big|
		g\big(\covD_\tau \dot{\gamma}_k, \dot{\gamma}_k\big)
		\Big| \mathrm{d}\tau\\
		\le h_0 + 2 \big(f(\gamma_k)\big)^{\frac{1}{2}}
		\Big(\int_0^1
			g(\dot{\gamma}_k, \dot{\gamma}_k) \mathrm{d}\tau
		\Big)^{\frac{1}{2}}
		\le h_0 + 2c \norm{H_k}_{L^\infty}^{\frac{1}{2}},
		\qquad \forall t \in [0,1].
	\end{multline*}
	By the above inequality, taking the supremum over $t \in [0,1]$,
	we obtain that $\norm{H_k}_{L^\infty}$ is uniformly bounded.

	Let us show the existence of a subsequence that 
	weakly converges to a curve in $\Gamma$.
	As a first step, let us notice that the uniform boundness 
	of $\norm{H_k}_{L^\infty}$ implies
	the weak convergence with respect to the 
	$H^1$--norm of a subsequence of  $(\gamma_k)_{k \in \mathbb{N}}$,
	which we denote again by $(\gamma_k)_{k \in \mathbb{N}}$,
	to a curve $\tilde\gamma$.
	Moreover, by the Ascoli-Arzel\`a Theorem,
	this convergence is also uniform.

	Now, using a finite cover of $\tilde\gamma$,
	we will prove the weak $H^2$--convergence.
	Let $I_{1},I_2,\dots,I_L \subset [0,1]$
	a finite collection of closed interval
	such that 
	$[0,1] = \cup_{\ell = 1}^L I_\ell$,
	and each $\tilde\gamma|_{I_\ell}$
	is inside a (open) local chart $(U_\ell,\varphi_\ell)$
	for $k = 1,\dots,K$.
	More precisely, this means that, for every $k$,
	$U_\ell$ is an open connected subset of $\mathbb{R}^n$,
	$\varphi_\ell\colon U_\ell \to M$ is a (smooth) diffeomorphism
	and $\tilde\gamma(I_\ell)\subset \varphi_\ell(U_\ell)$.
	Since $\gamma_n$ is uniformly convergent to $\tilde\gamma$,
	without loss of generality we can assume that also
	$\gamma_k(I_\ell)\subset \varphi_\ell(U_\ell)$ for every 
	$\ell$ and for every $k$.
	Let us denote by $(x_k)_{k\in\mathbb{N}}$ and $\tilde{x}$
	the parametrizations of $(\gamma_k)_{k \in \mathbb{N}}$
	and $\tilde\gamma$, respectively,
	in these local charts, independently on $\ell$,
	namely $\gamma_k(t) = \varphi_\ell(x_k(t))$
	for every $k$ and for every $t \in I_\ell$,
	and similarly for $\tilde\gamma$.
	We will prove that, for every $\ell$,
	$(x_k)_{k \in\mathbb{N}}$ weakly converges
	in the $H^2$--norm to $\tilde{x}$,
	up to subsequences,
	and this will end the proof.
	Hence, it suffices to prove that 
	$\norm{\ddot{x}_k}_{L^2(I_\ell)}$
	is uniformly bounded, independently on $\ell$.

	By the uniform convergence of $(\gamma_k)_{k \in \mathbb{N}}$
	to $\tilde\gamma_k$,
	there exists a compact subset of $M$ that contains
	all the images of $\gamma_k$.
	This implies that, setting
	\[
		g_\ell(v,w) = g(\mathrm{d}\varphi_\ell(v),\mathrm{d}\varphi_\ell(w)),
		\qquad \forall v,w \in \mathbb{R}^n,
	\]
	we can assume the existence of two constants $\alpha,\beta > 0$
	such that
	\[
		\alpha\norm{\ddot{x}_k}^2
		\le g_\ell(\ddot{x}_k,\ddot{x}_k)
		\le \beta\norm{\ddot{x}_k}^2,
		\qquad \forall \ell = 1,\dots, L.
	\]
	As a consequence, we need to prove that 
	\[
		\int_{I_\ell} g_k(\ddot{x}_k,\ddot{x}_k)\mathrm{d}t
	\]
	is uniformly bounded, independently on $k$. 
	In every local chart, the 
	components of the $\covD_t\dot{\gamma}_k$
	are given by a formula of the following kind
	\[
		\ddot{x}_k + \Gamma_\ell(x_k,\dot{x}_k),
	\]
	where $\Gamma_\ell\colon U_\ell\times \mathbb{R}^n \to \mathbb{R}^n$
	is a continuous map homogeneous of degree $2$
	with respect to the second variable,
	which can be explicitly computed from the
	Christoffel symbols of the connection 
	in the local coordinates.
	By the homogeneity of degree $2$ of the maps 
	$\Gamma_\ell$, and by the uniform boundness of
	$g(\dot{\gamma}_k,\dot{\gamma_k})
	= g_\ell(\dot{x}_k,\dot{x}_k)$,
	we infer the existence of a constant $a > 0$
	such that
	\[
		\int_{I_\ell}
		g_\ell\big(\Gamma_\ell(x_k,\dot{x}_k,),\Gamma_\ell(x_k,\dot{x}_k)\big)
		\mathrm{d}t
		\le a^2,
		\qquad \forall \ell = 1,\dots,L.
	\]
	Since $f(\gamma_k) \le c^2$ for every $k \in \mathbb{N}$,
	we then obtain the following estimate
	\begin{multline*}
		c^2 \ge f(\gamma_k)
		\ge \frac{1}{2}
		\int_{I_\ell}g\big(\covD_t\dot{\gamma}_k,\covD_t\dot{\gamma}_k\big)
		\mathrm{d}t
		=\frac{1}{2}
		\int_{I_\ell}
		g_\ell\big(\ddot{x}_k + \Gamma_\ell(x_k,\dot{x}_k),
		\ddot{x}_k + \Gamma_\ell(x_k,\dot{x}_k)\big)
		\mathrm{d}t\\
		= \frac{1}{2}
		\int_{I_\ell}
		g_\ell\big(\ddot{x}_k,\ddot{x}_k\big)\mathrm{d}t
		+
		\int_{I_\ell}
		g_\ell\big(\ddot{x}_k,\Gamma_\ell(x_k,\dot{x}_k)\big)\mathrm{d}t
		+ \frac{1}{2}
		\int_{I_\ell}
		g_\ell\big(\Gamma_\ell(x_k,\dot{x}_k),\Gamma_\ell(x_k,\dot{x}_k)\big)\mathrm{d}t\\
		\ge \frac{1}{2}
		\int_{I_\ell}
		g_\ell\big(\ddot{x}_k,\ddot{x}_k\big)\mathrm{d}t
		-
		\left( \int_{I_\ell}
			g_\ell\big(\ddot{x}_k,\ddot{x}_k\big)\mathrm{d}t
		\right)^{\frac{1}{2}}
		\left( \int_{I_\ell}
			g_\ell\big(\Gamma_\ell(x_k,\dot{x}_k),\Gamma_\ell(x_k,\dot{x}_k)\big)\mathrm{d}t
		\right)^{\frac{1}{2}}\\
		\ge \frac{1}{2}
		\int_{I_\ell}
		g_\ell\big(\ddot{x}_k,\ddot{x}_k\big)\mathrm{d}t
		- a
		\left( \int_{I_\ell}
			g_\ell\big(\ddot{x}_k,\ddot{x}_k\big)\mathrm{d}t
		\right)^{\frac{1}{2}},
	\end{multline*}
	from we we infer
	\[
		\int_{I_\ell}
		g_\ell\big(\ddot{x}_k,\ddot{x}_k\big)\mathrm{d}t
		\le \big(a + \sqrt{a^2 + 2c^2}\big)^2.
	\]
	Since the right hand term is independent on $k \in \mathbb{N}$
	and $\ell \in 1,\dots,L$,
	we are done.
\end{proof}
\begin{proof}[Proof of Theorem~\ref{theorem:existence}]
	Since $f$ is lower bounded, 
	it admits a minimizing sequence
	$(\gamma_k)_{k \in N}\subset \Gamma$ such that 
	\[
		m \coloneqq
		\inf_{\gamma \in \Gamma} f(\gamma) =
		\lim_{k \to \infty}f(\gamma_k) \in \mathbb{R}.
	\]
	Therefore, without loss of generality, we can assume that 
	\[
		f(\gamma_k) \le m +1, \qquad \forall k \in \mathbb{N}.
	\]
	By Lemma~\ref{lem:coercivity},
	$(\gamma_{k})_{k \in \mathbb{N}}$
	weakly converges with respect to the $H^2$--norm
	to a curve $\tilde\gamma \in \Gamma$, up to subsequences.
	Therefore, it suffices to prove that $f$
	is weakly lower semicontinuous with respect to this norm,
	so that
	\begin{equation}
		\label{eq:f-wlsc}
		f(\tilde\gamma)\le \liminf_{k \to \infty}f(\gamma_k) = m,
	\end{equation}
	namely, $\tilde\gamma$ is a minimizer of $f$.

	Using again the localization argument
	employed in the proof of Lemma~\ref{lem:coercivity},
	together with its notation,
	let $(U_\ell,\varphi_\ell)_{\ell = 1,\dots,L}$
	a collection of local charts 
	that covers $\tilde\gamma([0,1])$.
	Taking a subsequence if necessary,
	we can also assume that 
	$(U_\ell,\varphi_\ell)_{\ell = 1,\dots,L}$
	covers also the image of each $\gamma_k$.
	Then,~\eqref{eq:f-wlsc} is obtained by proving 
	that 
	\begin{equation}
		\label{eq:f-wlsc-local}
		\int_{I_\ell}
		g\big(\covD_t\dot{\tilde\gamma},\covD_t\dot{\tilde\gamma}\big)
		\mathrm{d}t
		\le \liminf_{k \to \infty}
		\int_{I_\ell} g\big(\covD_t\dot\gamma_k,\covD_t\dot\gamma_k\big)
		\mathrm{d}t,
		\qquad \forall \ell = 1,\dots,L.
	\end{equation}
	Since $(x_k)_{k \in \mathbb{N}}$
	$x_k \rightharpoonup \tilde{x}$
	in $H^2(I_\ell,U_\ell)$,
	it converges to $x$ with respect to $C^1$--norm
	and, as a consequence,
	$\Gamma_\ell(x_k,\dot{x}_k)$
	converges to $\Gamma_\ell(\tilde{x},\dot{\tilde{x}})$
	with respect to the $L^{\infty}$ norm,
	so strongly with respect to $L^2$, meaning that
	\[
		\lim_{k \to\infty}
		\int_{I_l}
		g_\ell\big(\Gamma_\ell(x_k,\dot{x}_k),
		\Gamma_\ell(x_k,\dot{x}_k)\big)
		\mathrm{d}t
		= \int_{I_l}
		g_\ell\big(\Gamma_\ell(\tilde{x},\dot{\tilde{x}}),
		\Gamma_\ell(\tilde{x},\dot{\tilde{x}})\big)
		\mathrm{d}t,
		\qquad \forall \ell = 1,\ldots,L.
	\]
	Moreover, thanks to the weak convergence of
	$\ddot{x}_k$ to $\ddot{\tilde{x}}$ in the $L^2$--norm,
	we have also that
	\[
		\lim_{k \to \infty}
		\int_{I_\ell}
		g_\ell\big(\ddot{x}_k,\Gamma_\ell(x_k,\dot{x}_k)\big)
		\mathrm{d}t
		=
		\lim_{k \to \infty}
		\int_{I_\ell}
		g_\ell\big(\ddot{x}_k,\Gamma_\ell(\tilde{x},\dot{\tilde{x}})\big)
		\mathrm{d}t
		=
		\int_{I_\ell}
		g_\ell\big(\ddot{\tilde{x}},\Gamma_\ell(\tilde{x},\dot{\tilde{x}})\big)
		\mathrm{d}t,
	\]
	for every $\ell = 1,\dots,L$.
	Summing up,~\eqref{eq:f-wlsc-local} 
	reduces as follows:
	\[
		\int_{I_\ell}
		g_\ell\big(\ddot{\tilde{x}},\ddot{\tilde{x}}\big) \mathrm{d}t
		\le \liminf_{k \to \infty} \int_{I_\ell}
		g_\ell\big(\ddot{x}_k,\ddot{x}_k\big) \mathrm{d}t,
		\qquad \forall \ell = 1,\dots, L,
	\]
	which is nothing but the weak lower semicontinuity of 
	the norm in $L^2$,
	hence we are done.
\end{proof}

\bigskip
\noindent
\textbf{Acknowledgements} 

\noindent
D. Corona thanks the partial support of INdAM 
(Italian National Institute of High Mathematics),
Grant: ``mensilit\`a di borse di studio per l’estero per l’a.a. 2023-2024''.

\noindent
D. Corona and R. Giamb\`o thank the partial support of GNAMPA INdAM
(Italian National Institute of High Mathematics),
Project: CUP-E53C22001930001.

\noindent
D. Corona and P. Piccione thank the partial support of FAPESP
(Projeto Temático Fapesp 2022/16097-2).

\bigskip
\begin{tabular}{lp{1cm}l}
	\textbf{D. Corona} & & 
	\textbf{R. Giamb\`o} \\
	Universit\`a degli Studi di Camerino & &
	Universit\`a degli Studi di Camerino\\
	School of Science and Technology  & & 
	School of Science and Technology \\
	Via Madonna delle Carceri 9 & & 
	Via Madonna delle Carceri 9 \\
	62032	-- Camerino (MC), Italy & & 
	62032	-- Camerino (MC), Italy \\
	\emph{E-mail}: {\tt dario.corona@unicam.it} & & 
	\emph{E-mail}: {\tt roberto.giambo@unicam.it}\\
	[0.5cm]
	\textbf{P. Piccione} & & \\
	Zhejiang Normal University, & & \\
	School of Mathematical Sciences, & & \\
	321004, Jinhua-ZJ, & & \\
	People’s Republic of China. & & \\
	\emph{Permanent Address}: & &\\
	Universidade de S\~ao Paulo & & \\
	Departamento de Matem\'atica & & \\
	Rua do Mat\~ao 1010 & & \\
	S\~ao Paulo, SP 05508--090, Brazil & & \\
	\emph{E-mail}: {\tt paolo.piccione@usp.br}
\end{tabular}
\end{document}